\newtheorem{theo}{Theorem}[section]
\newtheorem{lemma}[theo]{Lemma}
\begin{document}

\pagestyle{plain}

\title{The Steiner Wiener index of trees with a given segment sequence}

\author{Jie Zhang}
\address{Jie Zhang\\
School of Insurance\\
Shanghai Lixin University of Accounting and Finance\\
995 Shangchuan Road, Shanghai 201209,  P.R. China
}
\email{zhangjie.sjtu@163.com}

\author{Hua Wang}
\address{Hua Wang\\
Department of Mathematical Sciences \\
Georgia Southern University \\
Statesboro, GA 30460, USA
}
\email{hwang@georgiasouthern.edu}

\author{Xiao-Dong Zhang}
\address{Xiao-Dong Zhang\\
Department of Mathematics, MOE-LSC and SHL-MAC\\
Shanghai Jiao Tong University\\
800 Dongchuan road, Shanghai, 200240, P. R. China
}
\email{xiaodong@sjtu.edu.cn}
\thanks{
Supported by the National Natural Science Foundation of China
(No.11701372; No.11526140); Shanghai Natural Science Foundation (No.16ZR1422400), Shanghai YangFan Program (No.16YF1415900),
Excellent Teacher's Program of Shanghai Municipal Education Commission (No. ZZshjr15027).}

\date{}
\maketitle

\begin{abstract}
The Steiner distance of vertices in a set $S$ is the minimum size of a connected subgraph that contain these vertices. The sum of the Steiner distances over all sets $S$ of cardinality $k$ is called the Steiner $k$-Wiener index and studied as the natural generalization of the famous Wiener index in chemical graph theory. In this paper we study the extremal structures, among trees with a given segment sequence, that maximize or minimize the Steiner $k$-Wiener index. The same extremal problems are also considered for trees with a given number of segments.
\\
{\bf Keywords:} Steiner k-Wiener index, segment sequence, tree,  quasi-caterpillar.
\end{abstract}

\maketitle

\section{Introduction}
\label{sec:intro}

With $d(u,v)$ denoting the distance between two vertices $u$ and $v$ in a graph $G$, the well known Wiener index is defined as
\begin{equation}\label{eq:defw}
W(G) = \sum_{\{u,v\} \subset V(G)} d(u,v)
\end{equation}
where the sum is over all possible unordered pairs of vertices in $V(G)$. Introduced in 1947  \cite{wiener1947, wiener1947'}, the Wiener index has become one of the most studied graph invariants in chemical graph theory. Because of the applications related to acyclic molecular structures in chemistry and biochemistry, the Wiener index of trees has been extensively studied. See, for instance, \cite{and2016, dobrynin, laszlo, lin, lin2015, liu2010, liu2008, liu2016, Mukwembi, nina, shi1993, wang, zhang2010, zhang} and the references therein.

A {\it segment} of a tree $T$ is a subpath $P$ of $T$ such that the all internal vertices of $P$ are of degree 2 in $T$ and each end of $P$ is either a leaf or a branch vertex (a vertex of degree at least 3). The {\it segment sequence} $(l_1, l_2, \cdots, l_{m})$ of a tree is simply the sequence of the segment lengths in non-increasing order.
Among various classes of trees that have been studied, the trees with a given segment sequence are of particular interest to us. Along this line, it has been shown in \cite{lin2015} that the {\it starlike tree} (obtained from identifying the ends of all segments) minimizes the Wiener index among trees of a given segment sequence. A tree whose removal of pendant segments (segments with one end being a leaf) results in a path is called a {\it quasi-caterpillar}, which was shown in \cite{and2016} to maximize the Wiener index among trees of a given segment sequence. See Figure~\ref{fig:starcat} for an illustration of these extremal structures.

\begin{figure}[htbp]
 \centering
 \begin{subfigure}{\linewidth}
\centering
\begin{tikzpicture}

        \node[fill=black,circle,inner sep=1pt] (t4) at (0,0) {};
        \node[fill=black,circle,inner sep=1pt] (t5) at (1.76,0.64) {};
        \node[fill=black,circle,inner sep=1pt] (t6) at (2.76,0.64) {};
        \node[fill=black,circle,inner sep=1pt] (t1) at (2,0) {};
        \node[fill=black,circle,inner sep=1pt] (t2) at (1.76,-.64) {};
		
\node[fill=black,circle,inner sep=1pt] (a1) at (1.13,-0.98) {};

\node[fill=black,circle,inner sep=1pt] (p1) at (1, 0){};
\node[fill=black,circle,inner sep=1pt] (p2) at (40:1cm){};
\node[fill=black,circle,inner sep=1pt] (p3) at (2*40:1cm){};
\node[fill=black,circle,inner sep=1pt] (p4) at (3*40:1cm){};
\node[fill=black,circle,inner sep=1pt] (p5) at (4*40:1cm){};
\node[fill=black,circle,inner sep=1pt] (p6) at (5*40:1cm){};
\node[fill=black,circle,inner sep=1pt] (p7) at (6*40:1cm){};
\node[fill=black,circle,inner sep=1pt] (p8) at (7*40:1cm){};
\node[fill=black,circle,inner sep=1pt] (p9) at (8*40:1cm){};

       \draw (p2)--(t5) (t5)--(t6);
       \draw (p1)--(t1) (p9)--(t2);
       \draw (t4)--(p1) (t4)--(p2) (t4)--(p3) (t4)--(p4) (t4)--(p5);
       \draw (a1)--(p8) (t4)--(p6) (t4)--(p7) (t4)--(p8) (t4)--(p9) ;
\end{tikzpicture}
\end{subfigure}

\vspace{3 em}

\begin{subfigure}{\linewidth}
 \centering
\begin{tikzpicture}
        \node[fill=black,circle,inner sep=1pt] (t1) at (-4,0) {};
        \node[fill=black,circle,inner sep=1pt] (t2) at (-3,0) {};
		\node[fill=black,circle,inner sep=1pt] (t3) at (-2,0) {};
        \node[fill=black,circle,inner sep=1pt] (t4) at (-1,0) {};
        \node[fill=black,circle,inner sep=1pt] (t5) at (0,0) {};
        \node[fill=black,circle,inner sep=1pt] (t6) at (1,0) {};
        \node[fill=black,circle,inner sep=1pt] (t7) at (2,0) {};
        \node[fill=black,circle,inner sep=1pt] (a1) at (-1.7,-0.6) {};
        \node[fill=black,circle,inner sep=1pt] (a2) at (-1.7,-1.5) {};
		\node[fill=black,circle,inner sep=1pt] (a3) at (-1,-0.7) {};
        \node[fill=black,circle,inner sep=1pt] (a4) at (-1,-1.4) {};
        \node[fill=black,circle,inner sep=1pt] (a5) at (-1,-2) {};
        \node[fill=black,circle,inner sep=1pt] (a6) at (-.4,-0.7) {};
        \node[fill=black,circle,inner sep=1pt] (a7) at (.4,-0.7) {};
        \node[fill=black,circle,inner sep=1pt] (a8) at (-3,-0.7) {};
        \draw (t1)--(t7);
        \draw (t2)--(a8) (t4)--(a1) (t4)--(a5);
        \draw (t5)--(a6) (t5)--(a7) (a1)--(a2);
\end{tikzpicture}
\end{subfigure}
\caption{The starlike tree (top) and quasi-caterpillar (bottom) with segment sequence $(3,2,2,2,1,1,1,1,1)$.}\label{fig:starcat}
\end{figure}
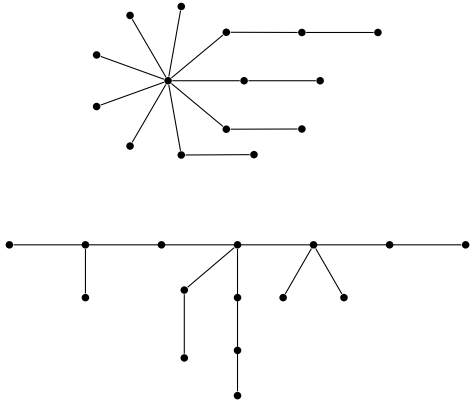

A natural generalization of the distance between two vertices is the {\it Steiner distance} $d(S)$ of the vertices of $S$, defined as the minimum size of a connected subgraph whose vertex set contains $S$. The sum of the Steiner distances (or equivalently, the average Steiner distance) was studied earlier in \cite{Dankelmann96, Dankelmann97} and was recently proposed independently as the generalization of the Wiener index \cite{li2016}. This {\it Steiner $k$-Wiener index} $SW_k(G)$ of
$G$ is defined as
$$SW_k(G)=\sum_{S\subseteq V(G), |S|=k} d(S)  $$
by replacing $d(u,v)$ in \eqref{eq:defw} by $d(S)$ for $|S|=k$. It is easy to see that $SW_2(G)=W(G)$. It was also shown in \cite{li2016} that
$$
SW_3(T)=\frac{n-2}{2}W(T)
$$
in trees.

As a generalization of the Wiener index the Steiner Wiener index has received much attention in the past few years \cite{gutman15, gutman, li2017, lu, mao, mao17, mao2017, wang2018}. We will consider the extremal problems with respect to the Steiner Wiener index in trees with a given segment sequence and show that the starlike tree and the quasi-caterpillar are still extremal. The proof of these statements are, as expected, more complicated than those of their counter parts with respect to the Wiener index. For convenience we will
use $\mathcal{T}_{\ell}$ to denote the set of trees of order $n$ with the segment sequence $\ell = (l_1, l_2, \cdots, l_{m})$.

\begin{theo}\label{theo:theo}
Among trees in $\mathcal{T}_{\ell}$, the starlike tree minimizes the Steiner $k$-Wiener index for any $k$.
\end{theo}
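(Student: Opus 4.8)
The plan is to bypass local ``branch-shifting'' transformations and reduce the statement to a one-variable convexity inequality applied segment by segment. Throughout write $n=1+\sum_i l_i$ for the order. First I would record the standard edge-cut identity for the Steiner Wiener index: an edge $e$ of a tree $T$ lies on the Steiner tree of a $k$-set $S$ if and only if $S$ meets both components of $T-e$, so writing $d(S)=\sum_e \mathbf{1}[e\in\mathrm{Steiner}(S)]$ and summing over all $k$-sets gives
\[
SW_k(T)=(n-1)\binom{n}{k}-\sum_{e\in E(T)}\left[\binom{n_e}{k}+\binom{n-n_e}{k}\right],
\]
where $\{n_e,\,n-n_e\}$ are the sizes of the two components of $T-e$. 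Thus it suffices to \emph{maximise} $\Psi(T):=\sum_{e}\bigl[\binom{n_e}{k}+\binom{n-n_e}{k}\bigr]$ over $\mathcal{T}_\ell$.

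Next I would partition $E(T)$ into its segments and evaluate $\Psi$ one segment at a time. Let $P$ be a segment of length $l$ with end vertices $x$ and $y$, and let $A$ (respectively $B$) be the number of vertices lying on the $x$-side (respectively $y$-side) of $P$. A short count gives $A+B=n+1-l$, with $A,B\ge 1$ and $A=1$ exactly when $x$ is a leaf. Moreover, walking along $P$ from $x$ to $y$ the successive cut sizes are $A,A+1,\dots,A+l-1$, so by the hockey-stick identity the edges of $P$ contribute precisely $\phi_l(A)+\phi_l(B)$ to $\Psi$, where $\phi_l(a):=\binom{a+l}{k+1}-\binom{a}{k+1}$. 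Hence $\Psi(T)=\sum_P\bigl[\phi_{l_P}(A_P)+\phi_{l_P}(B_P)\bigr]$.

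The third step is the convexity input. Computing the second difference, $\phi_l(a+1)-2\phi_l(a)+\phi_l(a-1)=\binom{a+l-1}{k-1}-\binom{a-1}{k-1}\ge 0$, shows $\phi_l$ is convex on the integers; consequently $a\mapsto\phi_l(a)+\phi_l(c-a)$ is convex, so among all splittings $A_P+B_P=n+1-l_P$ with $A_P,B_P\ge 1$ the sum $\phi_{l_P}(A_P)+\phi_{l_P}(B_P)$ is maximised when $\min\{A_P,B_P\}=1$. Summing the resulting bound $\phi_{l_P}(A_P)+\phi_{l_P}(B_P)\le\phi_{l_P}(1)+\phi_{l_P}(n-l_P)$ over the segments yields
\[
\Psi(T)\le\sum_{i=1}^{m}\bigl[\phi_{l_i}(1)+\phi_{l_i}(n-l_i)\bigr],
\]
a quantity depending only on $\ell$. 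In the starlike tree every segment is pendant, so each of its segments realises $\{A_P,B_P\}=\{1,n-l_P\}$; hence equality holds there and $SW_k$ is minimised by the starlike tree. (When $\phi_l$ is strictly convex on the relevant range, equality forces every segment of $T$ to be pendant, i.e.\ forces $T$ to be the starlike tree, giving uniqueness.)

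I expect the only genuinely delicate point to be the second step: verifying that the multiset of cut sizes along a segment is exactly $\{A,A+1,\dots,A+l-1\}$ and that $A+B=n+1-l$, no matter how the rest of $T$ hangs off the two ends of $P$. Once that bookkeeping is in place the argument is driven entirely by the elementary convexity of $\phi_l$. An alternative, more computational route — repeatedly detaching a subtree at a branch vertex farthest from a fixed centre and re-attaching it closer in, and tracking the sign of $\Delta SW_k$ through the same cut formula — is another possibility, but it looks messier.
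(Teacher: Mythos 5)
Your argument is correct, and it takes a genuinely different route from the paper. The paper proves the theorem by a local exchange: assuming some segment has two branch-vertex endpoints, it transplants all side branches from one endpoint to the other and checks, via a case analysis over how a $k$-set $S$ meets the various pieces, that $SW_k$ strictly decreases; iterating forces a single branch vertex. You instead argue globally: the cut identity $SW_k(T)=(n-1)\binom{n}{k}-\sum_{e}\bigl[\binom{n_e}{k}+\binom{n-n_e}{k}\bigr]$ turns the problem into maximizing $\Psi$, the hockey-stick computation packages each segment's contribution as $\phi_{l}(A)+\phi_{l}(B)$, and discrete convexity of $\phi_l$ bounds each segment independently, with the bound attained exactly at pendant segments. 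The steps you flag as delicate do check out: along a segment of length $l$ the cut sizes seen from the $x$-side are $A,A+1,\dots,A+l-1$ (equivalently $B,\dots,B+l-1$ from the other side), $A+B=n+1-l$, and the second difference $\binom{a+l-1}{k-1}-\binom{a-1}{k-1}\ge 0$ is right. Your approach buys a closed-form lower bound for $SW_k$ over $\mathcal{T}_{\ell}$ depending only on $\ell$, avoids any iteration or case analysis over $S$, and yields uniqueness of the minimizer wherever the convexity is strict (your caveat is appropriate, since the second difference can vanish for small $a$, though non-strict convexity already suffices for the theorem as stated). What it does not provide is the reusable ``switching'' lemma that the paper needs anyway for the maximization results in its later sections.
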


\begin{theo}\label{theo:theo'}
For any $k$ and $\ell$, among trees in $\mathcal{T}_{\ell}$, the Steiner $k$-Wiener index is maximized by a quasi-caterpillar.
\end{theo}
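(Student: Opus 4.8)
My plan is to rewrite $SW_k$ so that convexity becomes the governing principle, and then to repair any tree that is not a quasi-caterpillar by a single switching move. Using the cut description of Steiner distance (the Steiner tree of $S$ crosses an edge $e$ exactly when $S$ meets both components of $T-e$), one gets
\[
SW_k(T)=(n-1)\binom nk-\sum_{e\in E(T)}\Bigl[\binom{n_1(e)}{k}+\binom{n_2(e)}{k}\Bigr],
\]
where $n_1(e)$ and $n_2(e)=n-n_1(e)$ are the sizes of the two components of $T-e$. So maximizing $SW_k$ over $\mathcal T_\ell$ is the same as minimizing $\Phi(T):=\sum_e\bigl[\binom{n_1(e)}{k}+\binom{n_2(e)}{k}\bigr]$. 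The one fact I will use about $g(x):=\binom xk$ is its discrete convexity: $g(a+1)+g(b-1)\le g(a)+g(b)$ whenever $a\le b-2$, equivalently $y\mapsto g(y+s)-g(y+t)$ is nondecreasing for $s\ge t$. It also helps to group $\Phi$ by segments: the edges of a segment $\sigma$ of length $\ell_\sigma$, whose removal leaves components of sizes $a_\sigma$ and $b_\sigma$ with $a_\sigma+b_\sigma+\ell_\sigma-1=n$, contribute $\sum_{j=a_\sigma}^{a_\sigma+\ell_\sigma-1}\bigl[\binom jk+\binom{n-j}k\bigr]$, a sum over a length-$\ell_\sigma$ window of a function smallest near $n/2$; a pendant segment is forced to have $b_\sigma=1$ (an extreme, unavoidable window), whereas an internal segment's window can be slid toward the center. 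This already explains heuristically why a quasi-caterpillar should win: its internal segments form a single path that can sit across the center of the tree.

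Next I would fix notation for the target. Let $T'$ denote the subtree of $T$ spanned by its branch vertices (equivalently, $T$ with all pendant segments pruned back to their branch-vertex ends). Then $T$ is a quasi-caterpillar if and only if $T'$ is a path, if and only if no branch vertex of $T$ has three or more internal (non-pendant) segments; moreover every leaf of $T'$ is a branch vertex of $T$ carrying at least two pendant segments. Hence if $T\in\mathcal T_\ell$ is not a quasi-caterpillar there is a branch vertex $c$ whose $p\ge3$ internal segments lead into components $X_1,\dots,X_p$ of $T-c$, each containing a branch vertex, possibly alongside some pendant-path components.

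The switching move $\Sigma$: detach one internal direction $X$ of $c$; pick a leaf $b$ of $T'$ lying in a \emph{different} internal direction of $c$, chosen as light as possible (possible since at least two internal directions remain and the smallest has size at most half their total); pick a pendant segment $\pi$ at $b$, as short as possible. Form $\Sigma(T)$ by exchanging attachment points: reattach $X$ to $b$ along a fresh path of the same length as the segment by which $X$ met $c$, and reattach $\pi$ to $c$. Since the degrees of $c$ and $b$ are unchanged and no other vertex's neighbourhood changes, no segment is created, destroyed, merged, or split, so $\Sigma(T)\in\mathcal T_\ell$; and $T'$ loses the leaf $b$ while acquiring no new leaf, so $\Sigma$ strictly decreases the number of leaves of the pruned subtree and may be iterated until a quasi-caterpillar is reached.

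The crux is that $\Sigma$ does not increase $\Phi$. Only the side-sizes of edges on the $c$–$b$ path change, and writing $s=|X|$, $t=|\pi|$, and $r_1,r_2$ for the $c$-side and $b$-side sizes of such an edge $e$ (inside the part of $T$ left fixed by $\Sigma$), one computes
\[
\Phi(\Sigma(T))-\Phi(T)=\sum_{e}\Bigl(\bigl[g(r_1+t)-g(r_1+s)\bigr]-\bigl[g(r_2+t)-g(r_2+s)\bigr]\Bigr),
\]
the sum over the edges of the $c$–$b$ path. By the convexity fact each term is $\le 0$ exactly when the $c$-side dominates the $b$-side (assuming $s\ge t$; the reverse inequality governs when $t\ge s$), and since $r_1$ only grows as $e$ moves from $c$ toward $b$, it suffices to control the single edge incident to $c$ — which is precisely what the choice of $b$ in a lightest internal direction of $c$ is designed to do. The hard part, which I expect to be the main obstacle, is pinning down a selection of $X$, $b$, $\pi$ that simultaneously puts $s$ and $t$ on the correct side of one another and keeps the $b$-direction light enough; the degenerate situations — when the $c$–$b$ path is a single edge, or when the segment sequence is so lopsided that every leaf of $T'$ carries only long pendant segments — seem to need either a sharper selection rule or an auxiliary switching move tailored to them. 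Granting the inequality, the theorem follows: a maximizer in $\mathcal T_\ell$ exists by finiteness, and if it were not a quasi-caterpillar then $\Sigma$ would produce another maximizer whose pruned subtree has strictly fewer leaves, so iterating reaches a quasi-caterpillar attaining the maximum.
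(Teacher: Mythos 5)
Your route is genuinely different from the paper's and, in skeleton, sound. The paper never passes to the edge--cut formula $SW_k(T)=(n-1)\binom nk-\sum_e\bigl[\binom{n_1(e)}k+\binom{n_2(e)}k\bigr]$; instead it proves a switching lemma (Lemma~3.1) by enumerating, for each type of $k$-set $S$, how $d(S)$ changes, and then combines that lemma with a separate ``sliding'' step applied along a path containing the most segments. Your reformulation (which is correct) turns the whole mechanism into a statement about the discrete convexity of $x\mapsto\binom xk$ and collapses the case analysis to a telescoping sum over the edges of the $c$--$b$ path; it would also shorten the paper's Lemma~3.1 and the computations in Theorem~4.1 considerably. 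Your bookkeeping is right: only edges of the $c$--$b$ path change their cut sizes, the segment sequence is preserved because the degrees of $c$ and $b$ do not change, and the pruned tree loses the leaf $b$ without gaining one, so the iteration terminates.

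As written, though, the proof has a genuine gap exactly where you flag it: you never commit to a choice of $X$, $b$, $\pi$ for which the term-by-term inequality holds, and an arbitrary choice can fail. If $X$ is a small internal direction and $\pi$ is a long pendant segment sitting in a large direction, then $t>s$, the map $y\mapsto g(y+t)-g(y+s)$ is non\emph{de}creasing, the binding edge becomes the one incident to $b$, and the sum can be positive; so ``granting the inequality'' really is granting the theorem. The good news is that the gap closes with the obvious greedy rule and no auxiliary move: take $X=D_i$ a \emph{heaviest} internal direction of $c$, take $b$ a leaf of $T'$ inside a \emph{lightest} remaining internal direction $D_j$, and take $\pi$ any pendant segment at $b$. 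Since $D_j$ contains $b$, the $t$ vertices of $\pi\setminus\{b\}$, and at least one vertex of a second pendant segment at $b$, we get $t\le|D_j|-2<|D_j|\le|D_i|=s$; hence $h(y):=g(y+t)-g(y+s)$ is non-increasing and it suffices to verify $r_1\ge r_2$ at the edge of the $c$--$b$ path incident to $c$. There $r_2=|D_j|-t$ and $r_1=n-|D_j|-s$, so $r_1-r_2=n-2|D_j|-s+t\ge 1+|D_l|-|D_j|+t>0$, where $D_l$ is a third internal direction of $c$ (it exists because $p\ge3$ and satisfies $|D_l|\ge|D_j|$ by the choice of $j$). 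With this selection every term of your sum is $\le0$, so $\Phi$ does not increase, and your induction on the number of leaves of the pruned subtree completes the proof; the degenerate cases you worry about disappear (a one-edge $c$--$b$ path just gives a one-term sum, and $t<|D_j|\le s$ rules out the ``long pendant segment'' scenario automatically).
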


We present the proofs to Theorems~\ref{theo:theo} and \ref{theo:theo'} in Sections~\ref{sec:min} and \ref{sec:max}, respectively. In addition to showing that the tree (in $\mathcal{T}_{\ell}$) with maximum Steiner Wiener index must be a quasi-caterpillar, we further examine its structureal characteristics in Section~\ref{sec:quasi}. We also consider the extremal problems among trees with a given number of segments in Section~\ref{sec:num} and characterize the extremal trees. In Section~\ref{sec:con} we summarize our findings as well as point out some directions for potential future work.

\section{Proof of Theorem~\ref{theo:theo}}
\label{sec:min}

For a tree $T$ in $\mathcal{T}_{\ell}$ that minimizes the Steiner $k$-Wiener index, we call it the \emph{optimal tree}. To prove Theorem~\ref{theo:theo} we only need to show that $T$ has only one branch vertex.

Suppose, for contradiction, that $P(u_1, u_2):=u_1v_1v_2\cdots v_{s-1}u_2$ is a segment with both $u_1$ and $u_2$ being branch vertices.
Let the neighbors of $u_1$ be
$v_1$, $u_{1,1}$, $u_{1,2}$, $\cdots$, $u_{1,t-1}$ and denote by $T_{u_2}$ the component in $T-E(P(u_1,u_2))$ containing $u_2$.
Further let $T_{1,i}$ denote the component containing $u_{1,i}$ after removing the edge between $u_1$ and $u_{1,i}$, for $1\leq i\leq t-1$. For technical reasons we let $Q=T_{1,1}\cup \cdots\cup T_{1,t-1}$ be the union of these components and
$$T'=T-\{u_1u_{1,1} \cdots, u_1u_{1,t-1}\}+\{u_2u_{1,1} \cdots, u_2u_{1,t-1}\}.$$
See Figure~\ref{fig:1} for an illustration with $u_1=v_0$ and $u_2=v_s$.

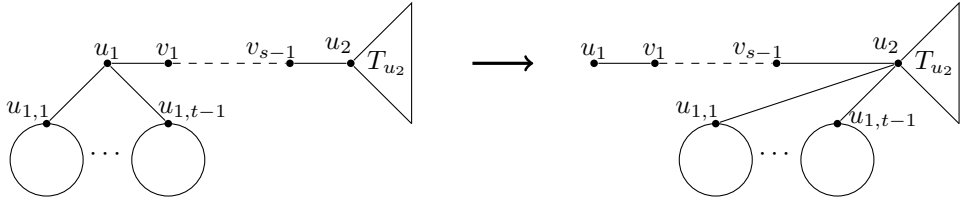
\begin{figure}[htbp]
\centering
    \begin{tikzpicture}[scale=.8]
        \node[fill=black,circle,inner sep=1pt] (t1) at (-1,0) {};
        \node[fill=black,circle,inner sep=1pt] (t2) at (0,0) {};
		\node[fill=black,circle,inner sep=1pt] (t3) at (2,0) {};
        \node[fill=black,circle,inner sep=1pt] (t4) at (3,0) {};
        \node[fill=black,circle,inner sep=1pt] (t5) at (-2,-1) {};
		\node[fill=black,circle,inner sep=1pt] (t6) at (0,-1) {};

        \draw (t1)--(t2) (t3)--(t4);
        \draw  (t1)--(t5);
        \draw  (t1)--(t6);
        \draw [dashed](t2)--(t3);

        \draw  (-2,-1.6) circle[ radius=0.6];
        \draw  (0,-1.6) circle[ radius=0.6];
       \draw  (t4)--(4,1)--(4,-1);
         \draw  (t4)--(4,-1);

        \node at (-1,.2) {$u_{1}$};
		\node at (0,.2) {$v_{1}$};
        \node at (1.7,.2) {$v_{s-1}$};
         \node at (2.8,.3) {$u_{2}$};

        \node at (-2.3,-0.8) {$u_{1,1}$};
        \node at (.4,-0.8) {$u_{1,t-1}$};
        \node at (3.6,0) {$T_{u_2}$};
         \node at (-1,-1.5) {$\cdots$};

         \draw[->,very thick] (5,0) -- (6,0);

        \node[fill=black,circle,inner sep=1pt] (u1) at (7,0) {};
        \node[fill=black,circle,inner sep=1pt] (u2) at (8,0) {};
		\node[fill=black,circle,inner sep=1pt] (u3) at (10,0) {};
        \node[fill=black,circle,inner sep=1pt] (u4) at (12,0) {};
        \node[fill=black,circle,inner sep=1pt] (u5) at (9,-1) {};
		\node[fill=black,circle,inner sep=1pt] (u6) at (11,-1) {};

        \draw (u1)--(u2) (u3)--(u4);
        \draw  (u4)--(u5);
        \draw  (u4)--(u6);
         \draw [dashed](u2)--(u3);

        \draw  (9,-1.6) circle[ radius=0.6];
        \draw  (11,-1.6) circle[ radius=0.6];
       \draw  (u4)--(13,1)--(13,-1);
        \draw  (u4)--(13,-1);

        \node at (7,.2) {$u_{1}$};
		\node at (8,.2) {$v_{1}$};
        \node at (9.7,.2) {$v_{s-1}$};
         \node at (11.8,.3) {$u_{2}$};

        \node at (8.7,-0.8) {$u_{1,1}$};
        \node at (11.75,-0.95) {$u_{1,t-1}$};
        \node at (12.6,0) {$T_{u_2}$};
         \node at (10,-1.5) {$\cdots$};
        \end{tikzpicture}
\caption{The trees $T$ and $T'$.}\label{fig:1}
\end{figure}

For different choices of $S \subset V(T)$, we now consider the change in $d(S)$ from $T$ to $T'$. We use $d_T(S)$ and $d_{T'}(S)$ to distinguish between the underlying tree structure. Sometimes we also use $|G|$ for $|V(G)|$ for a graph $G$.

\begin{itemize}
\item If $S\cap V(T_{u_2}-\{u_2\})=\emptyset$, by noting that
$T-(T_{u_2}-\{u_2\})$ and $T'-(T_{u_2}-\{u_2\})$ are isomorphic to each other, we have
$$\sum _{S\cap V(T_{u_2}-\{u_2\})=\emptyset}d_T(S)=\sum _{S\cap V(T_{u_2}-\{u_2\})=\emptyset}d_{T'}(S);$$

\item If $S\cap V(Q) = \emptyset$, it is easy to see that $d_T(S) = d_{T'}(S)$;

\item If $S\cap V(T_{u_2}-\{u_2\})\neq \emptyset$ and $S\cap V(Q) \neq \emptyset$:
\begin{itemize}
\item If $S \cap \{v_0, v_1, \cdots, v_s\} \neq \emptyset$, suppose $|S\cap V(Q)|=a$ and $|S\cap V(T_{u_2}-\{u_2\})|=b$ for some $a\geq 1$ and $1\leq b\leq k-a-1$. Further let $i_0$ be the smallest integer such that $v_{i_0} \in \{v_0, v_1, \cdots, v_s\}$ is in $S$, then
$$d_{T'}(S)=d_{T}(S)-i_0 \leq d_T(S) ;$$
\item If $S \cap \{v_0, v_1, \cdots, v_s\} =\emptyset$, then
$$d_{T'}(S)=d_{T}(S)-s < d_T(S) .$$
\end{itemize}
\end{itemize}

Summarizing all cases, we now have
$$ SW_k(T') - SW_k(T) < 0, $$
a contradiction.

\section{Maximizing $SW_k(T)$ in $\mathcal{T}_{\ell}$}
\label{sec:max}

In this section we will consider the problem of maximizing $SW_k(T)$ in $\mathcal{T}_{\ell}$. First we present a ``switching'' operation that increases the value of the Steiner $k$-Wiener index.

Let $P(w_0, w_s):= w_0w_1\cdots, w_s$ be a segment in $T$ with both $w_0$ and $w_s$ being branch vertices, and let the neighborhoods $N_T(w_0)=\{w_1, w_{0,1}, \cdots\}$ and $N_T(w_s)=\{w_{s-1}, w_{s,1}, \cdots\}$.
Similar to before we use $T_{0,1}$ $(T_{s,1})$ to denote the component containing $w_{0,1}$ $(w_{s,1})$ after removing the edge between $w_0w_{0,1}$
($w_sw_{s,1}$) from $T$, and $T_{w_0}$ $(T_{w_s})$ to denote the component in $T-E(P(w_0,w_s))$ that contains $w_0$ ($w_s$).
For simplicity we introduce the labels $X=V(T_{w_0}-T_{0,1})$, $Y=V(T_{w_s}-T_{s,1})$, $A=V(T_{0,1})$, and $B=V(T_{s,1})$ (Figure~\ref{fig:2}).

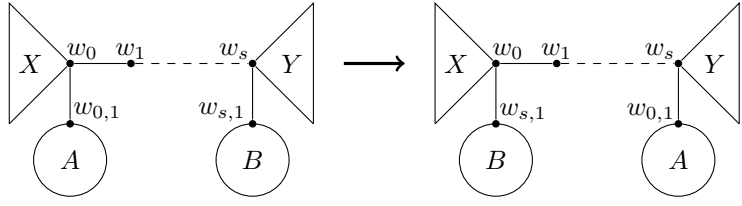
\begin{figure}[htbp]
\centering
    \begin{tikzpicture}[scale=.8]
        \node[fill=black,circle,inner sep=1pt] (t1) at (-1,0) {};
        \node[fill=black,circle,inner sep=1pt] (t2) at (0,0) {};
		\node[fill=black,circle,inner sep=1pt] (t3) at (2,0) {};
        \node[fill=black,circle,inner sep=1pt] (t5) at (-1,-1) {};
		\node[fill=black,circle,inner sep=1pt] (t6) at (2,-1) {};

        \draw (t1)--(t2);
        \draw [dashed](t2)--(t3);
        \draw  (t1)--(t5);
        \draw  (t3)--(t6);

        \draw  (-1,-1.6) circle[ radius=0.6];
        \draw  (2,-1.6) circle[ radius=0.6];
       \draw  (t1)--(-2,1)--(-2,-1);
       \draw  (t1)--(-2,-1);
       \draw  (t3)--(3,1)--(3,-1);
       \draw  (t3)--(3,-1);

        \node at (-0.8,.2) {$w_{0}$};
		\node at (0,.2) {$w_{1}$};
        \node at (1.7,.2) {$w_{s}$};

        \node at (-1.65,0) {$X$};
        \node at (2.65,0) {$Y$};
        \node at (-0.55,-0.8) {$w_{0,1}$};
        \node at (1.5,-0.8) {$w_{s,1}$};
        \node at (-1,-1.6) {$A$};
        \node at (2,-1.6) {$B$};

         \draw[->,very thick] (3.5,0) -- (4.5,0);

       \node[fill=black,circle,inner sep=1pt] (t1) at (6,0) {};
        \node[fill=black,circle,inner sep=1pt] (t2) at (7,0) {};
		\node[fill=black,circle,inner sep=1pt] (t3) at (9,0) {};
        \node[fill=black,circle,inner sep=1pt] (t5) at (6,-1) {};
		\node[fill=black,circle,inner sep=1pt] (t6) at (9,-1) {};

        \draw (t1)--(t2);
        \draw [dashed](t2)--(t3);
        \draw  (t1)--(t5);
        \draw  (t3)--(t6);

        \draw  (6,-1.6) circle[ radius=0.6];
        \draw  (9,-1.6) circle[ radius=0.6];
       \draw  (t1)--(5,1)--(5,-1);
        \draw  (t1)--(5,-1);
       \draw  (t3)--(10,1)--(10,-1);
       \draw  (t3)--(10,-1);

        \node at (6.2,.2) {$w_{0}$};
		\node at (7,.2) {$w_{1}$};
        \node at (8.7,.2) {$w_{s}$};

        \node at (5.35,0) {$X$};
        \node at (9.6,0) {$Y$};
        \node at (6.45,-0.83) {$w_{s,1}$};
        \node at (8.55,-0.83) {$w_{0,1}$};
        \node at (6,-1.6) {$B$};
        \node at (9,-1.6) {$A$};

        \end{tikzpicture}
\caption{The tree $T$ (on the left) and $T'$ (on the right) after the ``switching'' operation.}\label{fig:2}
\end{figure}

\begin{lemma}\label{lemma3.1}
Let $T'$ be obtained from $T$ by ``switching'' $A$ and $B$ (Figure~\ref{fig:2}). If $|X|> |Y|$ and $|A|> |B|$, then $SW_k(T')> SW_k(T)$.
\end{lemma}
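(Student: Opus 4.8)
Here is how I would approach Lemma~\ref{lemma3.1}. The plan is to avoid a direct set-by-set comparison of Steiner distances and instead use the ``cut form'' of the Steiner $k$-Wiener index of a tree. For a tree $G$ on $n$ vertices and an edge $e$, deleting $e$ splits $G$ into two parts of sizes $n_1(e)$ and $n-n_1(e)$, and $e$ lies on the Steiner tree of a vertex set $S$ precisely when $S$ meets both parts; writing $d(S)$ as the number of edges that separate $S$ and exchanging the order of summation gives
\[
SW_k(G)=\sum_{e\in E(G)}\left[\binom{n}{k}-\binom{n_1(e)}{k}-\binom{n-n_1(e)}{k}\right].
\]
Since $T$ and $T'$ have the same order $n$ and the same number $n-1$ of edges, it suffices to match up $E(T)$ with $E(T')$ (the edges inside $A$, inside $B$, inside $X$, inside $Y$, and the $s$ edges of the segment $P(w_0,w_s)$ are ``the same'' in both trees, while $w_0w_{0,1}\leftrightarrow w_sw_{0,1}$ and $w_sw_{s,1}\leftrightarrow w_0w_{s,1}$) and compare the corresponding edge contributions.

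The first real step is to check that the switching changes the pair of component sizes only for the $s$ segment edges. Indeed, an edge inside $A$ (or $B$) still separates a fixed subtree of $A$ (resp.\ $B$) from everything else; each attachment edge still separates all of $A$ (resp.\ $B$) from everything else; and an edge inside $X$ (or $Y$) still separates a fixed subtree of $X$ not containing $w_0$ (resp.\ of $Y$ not containing $w_s$) from everything else. Setting $\xi=|X|$, $\eta=|Y|$, $\alpha=|A|$, $\beta=|B|$ (so that $\xi+\eta+\alpha+\beta+s-1=n$), the segment edge $w_{j-1}w_j$ has component sizes $\{\xi+\alpha+j-1,\ \eta+\beta+s-j\}$ in $T$ and $\{\xi+\beta+j-1,\ \eta+\alpha+s-j\}$ in $T'$ for $1\le j\le s$, whence
\[
SW_k(T')-SW_k(T)=\sum_{j=1}^{s}\left[\binom{\xi+\alpha+j-1}{k}+\binom{\eta+\beta+s-j}{k}-\binom{\xi+\beta+j-1}{k}-\binom{\eta+\alpha+s-j}{k}\right].
\]

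To finish I would reindex $j\mapsto s+1-j$ in the second and fourth terms and apply the hockey-stick identity $\sum_{j=1}^{s}\binom{m+j-1}{k}=\binom{m+s}{k+1}-\binom{m}{k+1}=:F(m)$, reducing the right-hand side to the mixed second difference $F(\xi+\alpha)-F(\xi+\beta)-F(\eta+\alpha)+F(\eta+\beta)$. Since $F(m+2)-2F(m+1)+F(m)=\binom{m+s}{k-1}-\binom{m}{k-1}$, expanding the telescoping sums over the two ``directions'' gives
\[
SW_k(T')-SW_k(T)=\sum_{a=\beta}^{\alpha-1}\ \sum_{x=\eta}^{\xi-1}\left[\binom{x+a+s}{k-1}-\binom{x+a}{k-1}\right].
\]
Every summand is nonnegative because $\binom{N}{k-1}$ is nondecreasing in $N\ge 0$ and $s\ge 1$; the hypotheses $\alpha>\beta$ and $\xi>\eta$ make the index set nonempty; and the summand with the largest value of $x+a$ (namely $x=\xi-1$, $a=\alpha-1$) is strictly positive in all non-degenerate cases, which yields $SW_k(T')>SW_k(T)$. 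I expect the genuinely fiddly points to be (i) the bookkeeping that confirms that only the $s$ segment edges change their component sizes, and (ii) the borderline cases of the strict-positivity step when $n$ is only slightly larger than $k$ (when $n=k$, for instance, every tree of order $n$ has $SW_k=n-1$, so one really does need $n$ large enough); the ``convexity'' input itself — that $F$ has nonnegative second differences — is completely routine once the reduction is set up. A variant in the style of the proof of Theorem~\ref{theo:theo}, comparing $d_T(S)$ and $d_{T'}(S)$ directly over all $k$-sets $S$ grouped by $S\cap A$, $S\cap B$ and $S\cap\{w_0,\dots,w_s\}$, would also work, but the cut-form computation above is cleaner.
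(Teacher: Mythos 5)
Your proposal is correct and takes a genuinely different route from the paper's. The paper compares $d_T(S)$ and $d_{T'}(S)$ set by set, partitioning the $k$-sets according to how they meet $A$, $B$, $X$, $Y$ and the interior of the segment, and then cancels the signed contributions in pairs (via $\Delta_2^{i,j}+\Delta_2^{s-j,s-i}=0$ and $\Delta_{3_a}^{i,j}+\Delta_{3_a}^{s-j,s-i}=0$) until only manifestly nonnegative terms remain. You instead use the edge-cut expansion $SW_k(G)=\sum_{e}\bigl[\binom{n}{k}-\binom{n_1(e)}{k}-\binom{n-n_1(e)}{k}\bigr]$, observe that the switch alters the component-size pair only for the $s$ edges of $P(w_0,w_s)$, and reduce the difference to the mixed second difference $F(\xi+\alpha)-F(\xi+\beta)-F(\eta+\alpha)+F(\eta+\beta)$ of $F(m)=\binom{m+s}{k+1}-\binom{m}{k+1}$. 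I checked your intermediate steps — the component sizes $\xi+\alpha+j-1$ and $\eta+\beta+s-j$, the hockey-stick evaluation, the second difference $\binom{m+s}{k-1}-\binom{m}{k-1}$, and the double telescoping — and they are all correct. Your route is shorter, localizes the entire effect of the operation to $s$ edges, and replaces the paper's delicate case bookkeeping by a single discrete-convexity statement; the paper's direct comparison, on the other hand, is the template reused (with modifications) in the proofs of Theorems~\ref{thm:further} and \ref{theo:theo2}, where the operations are not pure switches and the cut formula would need to be redone for each variant.

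One point you raise deserves emphasis: your closed form $SW_k(T')-SW_k(T)=\sum_{a=\beta}^{\alpha-1}\sum_{x=\eta}^{\xi-1}\bigl[\binom{x+a+s}{k-1}-\binom{x+a}{k-1}\bigr]$ is strictly positive precisely when $\xi+\alpha+s-1\geq k$, and is \emph{exactly zero} otherwise (all binomials vanish), e.g.\ when $k=n$, since then $\xi+\alpha+s-1=n-\eta-\beta\leq n-2<k$. So the strict inequality in the lemma requires the implicit hypothesis $k\leq |X|+|A|+s-1$. This is not a defect of your argument relative to the paper's: the paper's proof asserts $\Delta_1>0$ without comment, and that assertion already fails once $k>|X|+|A|$. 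Your computation actually pins down the exact threshold at which the lemma degenerates to equality, which the paper's case analysis obscures.
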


\begin{proof}
Similar to before, we examine the change in $d(S)$, from $T$ to $T'$, depending on different choices of $S$. It is easy to see that $d_{T'}(S) = d_T(S)$ when $S$ does not contain any vertex from $A\cup B$ or when $S$ contains vertices from both $A$ and $B$. In what follows we assume that $S$ contains vertices from exactly one of $A$ and $B$. Similarly we only consider those choices of $S$ that contain vertices from at most one of $X$ and $Y$ (for otherwise $d(S)$ does not change from $T$ to $T'$).

\begin{itemize}
\item If $\{w_1,\cdots, w_{s-1}\}\not\in S$:

\begin{itemize}
\item $|S\cap A|=a$ for some $1\leq a \leq k-1$:
\begin{itemize}
\item If $|S\cap X|=k-a\geq 1,$ then $d_{T'}(S)=d_{T}(S)+s$;
\item If $|S\cap Y|=k-a\geq 1,$ then $d_{T'}(S)=d_{T}(S)-s$.
\end{itemize}
\item $|S\cap B|=a$ for some $1 \leq a \leq k-1$:
\begin{itemize}
\item If $|S\cap X|=k-a\geq 1,$ then $d_{T'}(S)=d_{T}(S)-s$;
\item If $|S\cap Y|=k-a\geq 1,$ then $d_{T'}(S)=d_{T}(S)+s$.
\end{itemize}
\end{itemize}

Summing over all cases so far, the total change in the sum of the values of $d(S)$ is
\begin{align*}
\Delta_1  & := s\cdot \left[ \sum_{a=1}^{k-1}\left(\binom{|A|}{a}-\binom{|B|}{a}\right)\left(\binom{|X|}{k-a}-\binom{|Y|}{k-a}\right) \right] > 0.
\end{align*}

\item If $|\{w_1,\cdots, w_{s-1}\}\cap S|=b$ for some $b \geq 1$. Let $i\geq 1$ be the smallest index such that $w_i \in S$, and $j\leq s-1$ be the largest index such that $w_j \in S$.
\begin{itemize}
\item If $S \cap (X \cup Y) = \emptyset$:
\begin{itemize}
\item $|S\cap A|=k-b\geq 1$, then $d_{T'}(S)=d_{T}(S)+s-i-j$.
\item $|S\cap B|=k-b\geq 1$, then $d_{T'}(S)=d_{T}(S)+i+j-s$.
\end{itemize}
Hence, for the above two subcases, the total change in the sum of the values of $d(S)$ is
\begin{align*}
\Delta_2^{i,j} & := (s-i-j)\cdot\sum_{b=1}^{k-1}\left(\binom{|A|}{k-b}-\binom{|B|}{k-b}\right)\cdot\binom{s-1}{b}.
\end{align*}
\item If  $S \cap (X \cup Y) \neq \emptyset$:
\begin{itemize}
\item $|S\cap A|=a\geq 1$, $|S\cap X|=k-a-b\geq 1,$ then $d_{T'}(S)=d_{T}(S)+s-j$.
\item $|S\cap A|=a\geq 1$, $|S\cap Y|=k-a-b\geq 1,$ then $d_{T'}(S)=d_{T}(S)-i$.
\item $|S\cap B|=a\geq 1$, $|S\cap X|=k-a-b\geq 1,$ then $d_{T'}(S)=d_{T}(S)+j-s$.
\item $|S\cap B|=a\geq 1$, $|S\cap Y|=k-a-b\geq 1,$ then $d_{T'}(S)=d_{T}(S)+i$.
\end{itemize}
Hence, for the above four subcases, the total change in the sum of the values of $d(S)$ is
\begin{align*}
\Delta_3 & := \sum_{a=1}^{k-2}\sum_{b=1}^{k-1-a}\left(s-j\right)\binom{|X|}{k-a-b}\binom{s-1}{b}\left(\binom{|A|}{a}-\binom{|B|}{a}\right)\\
 & \quad\quad -\sum_{a=1}^{k-2}\sum_{b=1}^{k-1-a}i\binom{|Y|}{k-a-b}\binom{s-1}{b}\left(\binom{|A|}{a}-\binom{|B|}{a}\right)\\
 & = \sum_{a=1}^{k-2}\sum_{b=1}^{k-1-a}\left(s-i-j\right)\binom{|X|}{k-a-b}\binom{s-1}{b}\left(\binom{|A|}{a}-\binom{|B|}{a}\right)\\
 & \quad\quad +\sum_{a=1}^{k-2}\sum_{b=1}^{k-1-a}i\binom{s-1}{b}\left(\binom{|X|}{k-a-b}-\binom{|Y|}{k-a-b}\right)\left(\binom{|A|}{a}-\binom{|B|}{a}\right) \\
& = \Delta_{3_a}^{i,j} + \Delta_{3_b}^i .
\end{align*}
\end{itemize}
\end{itemize}

It is easy to see that $\Delta_{3_b}^i > 0$. Then
\begin{align*}
SW_k(T')-SW_k(T) & = \Delta_1 + \sum_{1\leq i \leq j \leq s-1} \Delta_2^{i,j} + \sum_{1\leq i \leq j \leq s-1} \Delta_{3_a}^{i,j} + \sum_{1\leq i \leq s-1} \Delta_{3_b}^{i} \\
& > \sum_{1\leq i \leq j \leq s-1} \Delta_2^{i,j} + \sum_{1\leq i \leq j \leq s-1} \Delta_{3_a}^{i,j} \\
& = 0
\end{align*}
where the last identity follows from the fact that
$$ \Delta_2^{i,j} + \Delta_2^{s-j, s-i} =  \Delta_{3_a}^{i,j} + \Delta_{3_a}^{s-j, s-i} = 0 $$
for any choices of $i$ and $j$.
\end{proof}

We may now apply this ``switching'' operation to prove the main result of this section, that the Steiner $k$-Wiener index is maximized by a quasi-caterpillar.

\begin{proof}[Proof of Theorem~\ref{theo:theo'}]
Again we call the extremal tree optimal and let $T$ be such a tree. Consider a path $P$ in $T$, with the greatest number of segments on it, with end vertices $v_0$ and $v_{k}$ (both of which have to be leaves). We now denote, in the order of their distances from $v_0$, the branch vertices on $P$ by $v_1,v_2,\ldots,v_{k-1}$. Furthermore, for each $i$ ($1\leq i \leq k-1$) we let the neighbors of $v_i$ that do not lie on $P$ be $v_{i,1}, v_{i,2}, v_{i,3}, \ldots $, and let $T_{i,j}$ denote the component containing $v_{i,j}$ after removing the edge between $v_iv_{i,j}$. For convenience we use $T_{\geq t}$ $(T_{\leq t})$ for the subtree
induced by $\cup_{i\geq t}V(T_i)$ $(\cup_{i\leq t}V(T_i))$. Within each $T_{i,j}$ we let $u_{i,j}$ denote the branch vertex closest to $v_{i,j}$ and let $Q_{i,j}$ denote the component containing $u_{i,j}$ after removing the edges on the path from $v_i$ to $u_{i,j}$. See Figure~\ref{fig:ex9} for an illustration.

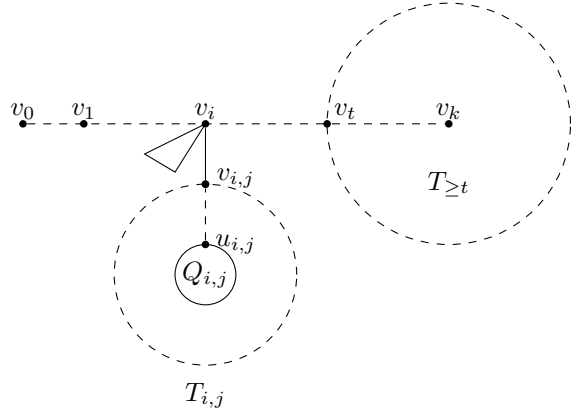
\begin{figure}[htbp]
\centering
    \begin{tikzpicture}[scale=.8]
        \node[fill=black,circle,inner sep=1pt] (t1) at (-1,0) {};
        \node[fill=black,circle,inner sep=1pt] (t2) at (0,0) {};
		\node[fill=black,circle,inner sep=1pt] (t3) at (2,0) {};
        \node[fill=black,circle,inner sep=1pt] (t4) at (4,0) {};
        \node[fill=black,circle,inner sep=1pt] (t9) at (6,0) {};
        \node[fill=black,circle,inner sep=1pt] (t5) at (2,-1) {};
        \node[fill=black,circle,inner sep=1pt] (t6) at (2,-2) {};
         \node[fill=black,circle,inner sep=1pt] (t8) at (2,-1) {};
        \draw [dashed](t1)--(t9)(t6)--(t5);
        \draw  (t3)--(t5) ;

        \draw [dashed](6,0)  circle [radius=2];
        \draw (2,-2.5)  circle [radius=.5];
        \draw [dashed] (2,-2.5)  circle [radius=1.5];
        \draw (2,0)--(1,-.5)--(1.5,-0.8)--cycle;

        \node at (4.3,.2) {$v_{t}$};
        \node at (2,.2) {$v_{i}$};
        \node at (0,.2) {$v_{1}$};
        \node at (-1,.2) {$v_{0}$};
        \node at (6,.2) {$v_{k}$};

                \node at (6,-1) {$T_{\geq t}$};
                \node at (2.5,-0.9) {$v_{i,j}$};
                \node at (2.5,-2) {$u_{i,j}$};
                 \node at (2,-2.5) {$Q_{i,j}$};
                 \node at (2,-4.5) {$T_{i,j}$};

        \end{tikzpicture}
\caption{Labeling of the path $P(v_0, v_k)$ and related objects.}\label{fig:ex9}
\end{figure}

Suppose, for contradiction, that $T$ is not a quasi-caterpillar. Then there is at least one $Q_{i,j}$ that is not a single vertex.
Let $Q = Q_{i_0,j_0}$ be such a component and suppose, without loss of generality, that
\begin{equation}\label{eq:branchsizes}
 |T_{\leq i_0-1}| \geq |T_{\geq i_0+1}|.
\end{equation}
Note that we can further assume that $i_0$ is the largest index among those of all such components. That is, $|Q_{i,j}|=1$ for all $i > i_0$.

By our choice of the path $P = P(v_0,v_k)$ (as the one with the most number of segments) it is easy to see that $i_0 \neq k-1$. Thus $v_{i_0+1}$ is still a branch vertex (Figure~\ref{fig:ex1}).

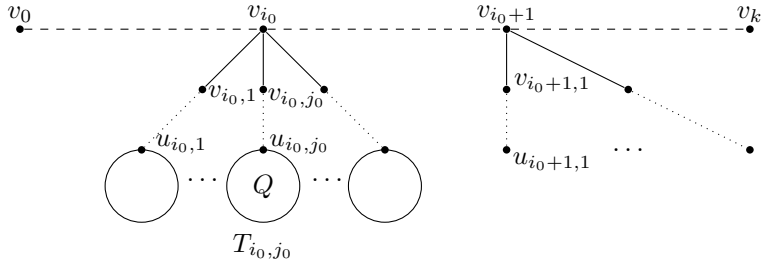
\begin{figure}[htbp]
\centering
    \begin{tikzpicture}[scale=.8]
        \node[fill=black,circle,inner sep=1pt] (t1) at (-4,0) {};
        \node[fill=black,circle,inner sep=1pt] (t2) at (0,0) {};
		\node[fill=black,circle,inner sep=1pt] (t3) at (4,0) {};
        \node[fill=black,circle,inner sep=1pt] (t4) at (8,0) {};
        \node[fill=black,circle,inner sep=1pt] (t5) at (-1,-1) {};
		\node[fill=black,circle,inner sep=1pt] (t6) at (0,-1) {};
		\node[fill=black,circle,inner sep=1pt] (t7) at (1,-1) {};
		\node[fill=black,circle,inner sep=1pt] (t8) at (4,-1) {};
		\node[fill=black,circle,inner sep=1pt] (t9) at (6,-1) {};

        \node[fill=black,circle,inner sep=1pt] (u1) at (-2,-2) {};
         \node[fill=black,circle,inner sep=1pt] (u2) at (0,-2) {};
          \node[fill=black,circle,inner sep=1pt] (u3) at (2,-2) {};
          \node[fill=black,circle,inner sep=1pt] (u4) at (4,-2) {};
           \node[fill=black,circle,inner sep=1pt] (u5) at (8,-2) {};

        \draw [dashed](t2)--(t3);
         \draw [dashed] (t1)--(t2);
          \draw [dashed] (t3)--(t4);
        \draw  (t2)--(t5);
        \draw  (t2)--(t6);
        \draw (t2)--(t7);
        \draw (t3)--(t8);
        \draw (t3)--(t9);
        \draw [dotted] (t5)--(u1);
        \draw [dotted] (t6)--(u2);
        \draw [dotted] (t7)--(u3);
        \draw [dotted] (t8)--(u4);
        \draw [dotted] (t9)--(u5);

        \draw  (-2,-2.6) circle[ radius=0.6];
        \draw  (0,-2.6) circle[ radius=0.6];
        \draw  (2,-2.6) circle[ radius=0.6];

        \node at (0,.25) {$v_{i_0}$};
		\node at (4,.25) {$v_{i_0+1}$};
        \node at (8,.25) {$v_{k}$};
        \node at (-4,.25) {$v_{0}$};

        \node at (-0.5,-1.1) {$v_{i_0,1}$};
        \node at (0.55,-1.15) {$v_{i_0,j_0}$};
        \node at (4.75,-0.9) {$v_{i_0+1,1}$};

        \node at (-1.35,-1.9) {$u_{i_0,1}$};
        \node at (0.6,-1.9) {$u_{i_0,j_0}$};
        \node at (4.75,-2.2) {$u_{i_0+1,1}$};
          \node at (1.05,-2.5) {$\ldots$};
         \node at (6,-2) {$\ldots$};
         \node at (-1,-2.5) {$\ldots$};

            \node at (0,-2.6) {$Q$};
             \node at (0,-3.6) {$T_{i_0,j_0}$};

        \end{tikzpicture}
\caption{Labeling of the subtree $T_{i_0,j_0}$ and related objects.}\label{fig:ex1}
\end{figure}

For simplicity we let $X=V(T_{\leq i_0}-T_{i_0,j_0}-\{v_{i_0}\})$ and $Y=V(T_{\geq i_0+1}-T_{i_0+1,1}-\{v_{i_0+1}\})$.

We then relabel the vertices of $P(u_{i_0,j_0}, u_{i_0+1,1})$ as  $u_0,\cdots, u_p,\cdots, u_{p+q},\cdots, u_{p+q+p'}$ such that
$u_0=u_{i_0,j_0}$, $u_p=v_{i_0}$, $u_{p+q}=v_{i_0+1}$, and $u_{p+q+p'}=u_{i_0+1,1}$. See the illustration on the left of Figure~\ref{fig:3} where $p$, $q$, and $p'$ are the lengths of $P(v_{i_0},u_{i_0,j_0})$, $P(v_{i_0}, v_{i_0+1})$, and $P(v_{i_0+1},u_{i_0+1,1})$, respectively.


We will show that ``moving'' $Q$ from $u_0=u_{i_0,j_0}$ to $u_{p+q+p'}=u_{i_0+1,1}$ will increase the value of the Steiner $k$-Wiener index.

First consider the case $p\geq p'$, let $T'$ be obtained from $T$ by switching ``switching'' $X$ and $Y$. Then direct application of Lemma~\ref{lemma3.1} implies $SW_k(T')> SW_k(T)$.

If $p < p'$, we consider the tree $T_1$ obtained from $T$ by
``sliding'' the component $G_Z$ induced by $Z:= X\cup V(P(u_p, u_{p+q})) \cup Y$ from $u_p$ to $u_{p'}$ (Figure~\ref{fig:3}).

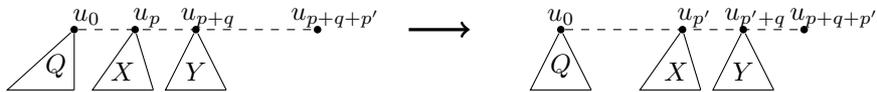
\begin{figure}[htbp]
\centering
    \begin{tikzpicture}[scale=.8]
        \node[fill=black,circle,inner sep=1pt] (t1) at (-2,0) {};
        \node[fill=black,circle,inner sep=1pt] (t2) at (-1,0) {};
		\node[fill=black,circle,inner sep=1pt] (t3) at (0,0) {};
        \node[fill=black,circle,inner sep=1pt] (t4) at (2,0) {};

        \draw [dashed] (t1)--(t4);

      \draw  (t3)--(-0.5,-1)--(0.5,-1);
      \draw  (t1)--(-3.1,-1)--(-2,-1);
       \draw  (t2)--(-1.7,-1)--(-0.7,-1);
       \draw  (t3)--(0.5,-1);
      \draw  (t1)--(-2,-1);
       \draw  (t2)--(-0.7,-1);

        \node at (-1.8,.2) {$u_{0}$};
        \node at (-0.8,.2) {$u_{p}$};
		\node at (0.2,.2) {$u_{p+q}$};
        \node at (2.3,.2) {$u_{p+q+p'}$};

        \node at (-2.3,-.6) {$Q$};
        \node at (-1.2,-0.7) {$X$};
         \node at (0,-0.7) {$Y$};

         \draw[->,very thick] (3.5,0) -- (4.5,0);

   \node[fill=black,circle,inner sep=1pt] (t1) at (6,0) {};
        \node[fill=black,circle,inner sep=1pt] (t2) at (8,0) {};
		\node[fill=black,circle,inner sep=1pt] (t3) at (9,0) {};
        \node[fill=black,circle,inner sep=1pt] (t4) at (10,0) {};

        \draw [dashed] (t1)--(t4);

      \draw  (t3)--(8.5,-1)--(9.5,-1);
       \draw  (t1)--(5.5,-1)--(6.5,-1);
       \draw  (t2)--(7.3,-1)--(8.3,-1);
      \draw  (t3)--(9.5,-1);
       \draw  (t1)--(6.5,-1);
       \draw  (t2)--(8.3,-1);

        \node at (6,.2) {$u_{0}$};
        \node at (8.2,.2) {$u_{p'}$};
		\node at (9.2,.2) {$u_{p'+q}$};
        \node at (10.5,.2) {$u_{p+q+p'}$};

        \node at (6,-.6) {$Q$};
        \node at (7.9,-0.7) {$X$};
         \node at (9,-0.7) {$Y$};
        \end{tikzpicture}
\caption{``Sliding'' the component $G_Z$ from $u_p$ to $u_{p'}$ to generate $T_1$ (on the right).}\label{fig:3}
\end{figure}

As before we consider the change of $d(S)$ from $T$ to $T_1$ depending on the choices of $S$.

It is easy to see that $d(S)$ stays the same if $S \cap Q = \emptyset$ or if $S$ does not contain any vertex in $Z$. Hence in what follows we will assume $S\cap Q \neq \emptyset$ and $S\cap Z \neq \emptyset$.

For simplicity we will only present the proof to the case of $p'>p+q$. The case of $p' \leq p+q$ is similar and we skip the details.

Let $|S\cap V(Q)|=a \geq 1$ and $|S\cap Z|=b \geq 1$, we consider the following cases:
\begin{itemize}
\item If $|S\cap V(P(u_{p+q},u_{p+q+p'}))|=0$, then $d_{T_1}(S)=d_T(S)+p'-p-q$.
\item If $S$ contains some vertices on $P(u_{p+q},u_{p+q+p'})$, let $u_s$ be one with the largest index:
\begin{itemize}
\item If $u_s\in P(u_{p+q},u_{p'})$, $p+q\leq s\leq p'-1$, then $d_{T_1}(S)=d_T(S)+p'-s$.
\item If $u_s \in P(u_{p'},u_{p'+q})$, $p'\leq s \leq p'+q-1$, then $d_{T_1}(S)=d_T(S)+p'+q-s$.
\item If $u_s\in P(u_{p'+q},u_{p+q+p'})$, then $d_{T_1}(S)=d_T(S)$.
\end{itemize}
\end{itemize}
It is easy to see that $d_{T_1}(S) \geq d_T(S)$ in each of the above cases. Thus
$$ SW_k(T_1) - SW_k(T) \geq 0. $$

We now consider $T'$ obtained from $T_1$ by ``switching'' $X$ and $Y$, direct application of Lemma~\ref{lemma3.1} implies $SW_k(T')> SW_k(T_1) \geq SW_k(T)$.

Note that $T'$ is exactly the result of ``moving'' $Q$ from $u_0=u_{i_0,j_0}$ to $u_{p+q+p'}=u_{i_0+1,1}$ in $T$ and the segment sequence is preserved from $T$ to $T'$, we have a contradiction to the
optimality of $T$.
\end{proof}

\section{Further characterization of the extremal quasi-caterpillar}
\label{sec:quasi}

In this section we further discuss the characteristics of the extremal quasi-caterpillars. First let the longest path of a quasi-caterpillar containing all the branch vertices be called the {\it backbone}; all segments that do not lie on the backbone (and thus connect a leaf with a branch vertex) are called \emph{pendant segments}.

\begin{theo}\label{thm:further}
Let $T$ be a quasi-caterpillar that maximizes the Steiner $k$-Wiener index in $\mathcal{T}_{\ell}$, and let the backbone be $P(v_0,v_k)$ between leaves $v_0$ and $v_k$ with branch vertices $v_1, v_2, \ldots, v_{k-1}$ (in the order of their distances from $v_0$). Then $T$
must satisfy the following:
\begin{enumerate}
\item All branch vertices are of degree 3 or 4, the only branch vertices that could possibly have degree 4 are $v_1$ and $v_{k-1}$.
\item The lengths of the segments on the backbone, listed from one end to the other, form a unimodal sequence $r_1, r_2, \ldots ,r_k$, i.e.,
$$r_1 \leq r_2 \leq \cdots \leq r_{j} \geq  \cdots \geq r_k $$
for some $j \in \{1,2,\ldots,k\}$;
\item The lengths of the pendant segments, starting from one end of the backbone towards the other, form a sequence of values $s_1,s_2,\ldots,s_{k'}$ such that
$$ s_1 \geq s_2 \geq \cdots \geq s_{j'} \leq  \cdots \leq s_{k'} $$
for some $j' \in \{1,2,\ldots,k'\}$.
\end{enumerate}
\end{theo}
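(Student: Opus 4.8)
My plan is to establish all three statements by the same kind of reasoning: assuming the relevant property fails, produce a modification of the optimal quasi-caterpillar $T$ that preserves the segment sequence $\ell$ and the set of branch vertices but strictly increases $SW_k$, contradicting optimality. Two ingredients do the work. The first is Lemma~\ref{lemma3.1} itself. The second is a \emph{sliding estimate}: if $R$ is a pendant segment attached at a branch vertex, with $|R|$ vertices besides its attachment point, and $R$ is detached and re-attached (still as a pendant segment) at an adjacent branch vertex, then for a $k$-subset $S$ the Steiner distance $d(S)$ changes only when $S$ meets the interior of $R$ while $S\setminus R$ lies entirely on one side of the crossed edge; counting such $S$ by the familiar products $\binom{|R|}{a}\binom{m}{k-a}$, with $m$ the size of the relevant side, and summing over the edges of the traversed backbone segment, shows that moving $R$ towards whichever end of the backbone lies on the \emph{lighter} side strictly increases $SW_k$. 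Here ``lighter'' refers to the two backbone-components of $T$ minus the vertex $R$ was attached to, at least one of which has fewer than half of the vertices. The very same count, carried out with the ``chunk'' consisting of a branch vertex together with \emph{all} its pendant segments in place of $R$, evaluates the effect of interchanging the two consecutive backbone segments flanking that branch vertex.

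For part~(1), suppose some branch vertex $v_i$ violates it: either it has degree at least $5$, or $2\le i\le k-2$ and it has degree at least $4$. In either case $v_i$ carries at least two pendant segments; I remove its shortest one, $R$, so that $v_i$ still has degree at least $3$ and, together with the branch vertex to which $R$ is re-attached, stays a branch vertex, $\ell$ being preserved. When $2\le i\le k-2$, relocating $R$ to the neighbouring branch vertex on the lighter side strictly increases $SW_k$ by the sliding estimate --- the other pendant segment still at $v_i$, being at least as long as $R$, keeps the ``near side'' strictly the smaller one at every step --- contradicting optimality. When $v_i\in\{v_1,v_{k-1}\}$ (so that it must then have degree at least $5$), one uses in addition the freedom to choose the backbone so that $P(v_0,v_1)$ (resp. $P(v_{k-1},v_k)$) is at least as long as every pendant segment at $v_1$ (resp. $v_{k-1}$), and, if a single relocation towards $v_2$ (resp. $v_{k-2}$) does not already help, composes it with a further relocation so that the net effect carries a pendant segment towards the opposite, lighter end of the backbone. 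Hence every interior branch vertex has degree exactly $3$ and $v_1,v_{k-1}$ have degree $3$ or $4$.

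Parts~(2) and~(3) follow the pattern ``no improving local exchange $\Rightarrow$ the sequence is monotone-then-monotone'' (rising-then-falling for the backbone segments, falling-then-rising for the pendant segments). For~(3), I apply Lemma~\ref{lemma3.1} to each backbone segment $P(v_i,v_{i+1})$ ($1\le i\le k-2$) with its two incident pendant segments playing the roles of $A$ and $B$. Write $x_i$ (resp. $y_i$) for the number of vertices lying at or to the left of $v_i$ (resp. at or to the right of $v_{i+1}$), excluding the pendant segment there. Optimality forbids an improving switch, which yields: if $x_i>y_i$ then the pendant at $v_i$ is no longer than the one at $v_{i+1}$, and if $y_i>x_i$ then the reverse. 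Since $x_i$ increases and $y_i$ decreases with $i$, there is a threshold past which $x_i>y_i$; the two implications then say precisely that the pendant lengths, read along the backbone, are non-increasing up to that threshold and non-decreasing afterwards --- a valley. The ties $x_i=y_i$ are harmless: at the next segment $x_{i+1}-y_{i+1}$ becomes strictly positive (it equals the sum of the two pendant-segment lengths and the two backbone-segment lengths involved), so the valley shape is in fact forced. Part~(2) is identical, with ``interchange of the two consecutive backbone segments at $v_i$'', evaluated by the chunk-sliding estimate, replacing the application of Lemma~\ref{lemma3.1}; no such interchange improves $SW_k$, and comparing $x_i$ with $y_i$ now forces the backbone-segment lengths to rise up to the branch vertex nearest the center of mass and fall thereafter --- unimodality.

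The part I expect to require genuine care is making the sliding estimates quantitatively sharp. Traversing a backbone segment of length greater than $1$ absorbs one degree-$2$ vertex at each edge, so the two side-counts shift by one at every step; one must perform the count one edge at a time, sum, and then invoke the strict monotonicity of the binomial coefficients to pass from ``$\ge$'' to ``$>$'' whenever the two sides genuinely differ --- which, provided $n$ is not too small relative to $k$, is exactly the situation whenever there is any imbalance at all. A secondary but necessary point throughout is to check that every modification leaves the multiset of segment lengths and the branch-vertex structure intact (a relocated pendant segment keeps its length, an interchange merely permutes positions), so that the modified tree again belongs to $\mathcal{T}_\ell$ and the contradiction with optimality is legitimate.
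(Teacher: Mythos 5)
Your overall strategy is the same as the paper's: local exchanges certified either by Lemma~\ref{lemma3.1} or by an edge-by-edge count for a sliding move, with the monotone comparison of the two side-sizes driving the unimodal and valley shapes in parts (2) and (3). Your threshold reformulation of (2) and (3) (``left side heavier forces one inequality, right side heavier forces the other, and the difference of side-sizes is strictly monotone along the backbone'') is a clean repackaging of the paper's argument, which instead starts at the extremal segment and propagates outward; the two are equivalent. However, part (1) has a genuine gap at $v_1$ and $v_{k-1}$ when the degree is at least $5$. For a \emph{single} relocated pendant segment the only legal destination is another branch vertex: re-attaching one pendant segment at the leaf $v_0$ merges it with $P(v_0,v_1)$ into one longer segment, and attaching it at an interior vertex of $P(v_0,v_1)$ splits that segment, so either way the tree leaves $\mathcal{T}_\ell$. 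Now take $r_1=1$, three unit pendant segments at $v_1$, $k=3$, and essentially all of the mass in the pendant segment at $v_2$: moving one segment from $v_1$ to $v_2$ strictly \emph{decreases} $SW_k$, there is no other branch vertex to send it to, and no relabelling of the backbone or composition of relocations produces a legal improving move of a single segment. The paper's fix is to move \emph{two} pendant segments at once: the adjacent leaf end then becomes a degree-$3$ branch vertex, the segment sequence is preserved, and the ``without loss of generality'' choice of direction (toward the lighter side) is always available. You need this device, or something equivalent, to rule out degree $\geq 5$ at $v_1$ and $v_{k-1}$.

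A secondary point: your justification that the relocation improves ``at every step'' because the remaining pendant segment keeps the near side strictly smaller is false in general. If the traversed backbone segment is long, the first few crossed edges can have the destination side heavier than the origin side, so the per-edge contributions need not all be nonnegative. The aggregate is still strictly positive under the correct hypothesis (origin side minus the moved piece exceeds the destination side): pair the $t$-th crossed edge with the one symmetric to it in the traversed segment and use that
\begin{equation*}
x\ \longmapsto\ \sum_{a}\binom{|R|}{a}\left(\binom{x}{k-a}-\binom{N-x}{k-a}\right)
\end{equation*}
is nondecreasing and antisymmetric about $N/2$ --- this is exactly the cancellation $\Delta_2^{i,j}+\Delta_2^{s-j,s-i}=0$ in the proof of Lemma~\ref{lemma3.1}. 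You gesture at this (``perform the count one edge at a time, sum, and invoke monotonicity''), but it is this pairing, not per-step positivity, that must carry the argument, and it is the part of the proof that actually has to be written out.
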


\begin{proof}

Part of the proof here is very similar to that of the analogous statement (for the regular Wiener index) in \cite{and2016} and we skip some details.

\begin{enumerate}

\item First we show that no branch vertex is of degree $\geq 5$. Otherwise, let $v_i$ be with neighbors $v_{i1}, v_{i2}, v_{i3}, \ldots$ not on $P(v_0,v_k)$. Let $T_{\leq i}$ ($T_{\geq i}$) be defined in the same way as before and let $T_{i1}, T_{i2}, T_{i3}$ be the pendant segments at $v_i$ containing $v_{i1},v_{i2},v_{i3}$ respectively.

Suppose, without loss of generality, that
$$ |T_{\leq  i-1}| \geq |T_{\geq  i+1}|  $$
and hence
$$ |T_{\leq i} - T_{i1} - T_{i2}| > |T_{> i}| . $$

Let $T' \in \mathcal{T}_{\ell}$ be obtained from $T$ by detaching $T_{i1}$ and $T_{i2}$ from $v_i$ and reattaching them to $v_{i+1}$. Lemma~\ref{lemma3.1} implies that $SW_k(T') >SW_k(T)$, a contradiction.

Note that the above argument can be applied to a vertex $v_i$ of degree 4 (moving only one segment instead of two) to obtain a contradiction, unless $v_i = v_1$ or $v_i = v_{k-1}$. Thus the only branch vertices that could possibly have degree 4 are $v_1$ and $v_{k-1}$.

\item Let $r_1, \ldots,r_k$ be the lengths of the segments $P(v_0,v_1), \ldots , P(v_{k-1}, v_k)$ on the backbone, and let $M$ be be the maximum length among them. Suppose, without loss of generality, that not all segments on the backbone are of the same length, and let $j$ be the smallest index such that $r_j = d(v_{j-1}, v_{j})= M > r_{j+1} = d(v_{j}, v_{j+1})$.

Now let $T_{j}$ denote the pendant segment at $v_{j}$ and relabel the vertices on $P(v_{j-1},v_{j+1})$ with $u_0u_1\cdots u_{p'}\cdots u_{p+p'}$ such that
 $u_0=v_{j-1}$, $u_{p'}=v_j$, and $u_{p+p'}=v_{j+1}$, where
$p'=r_j$ and $p=r_{j+1}$.


If $|T_{\leq j-1}| < |T_{\geq j+1}|$, we consider the tree $T' \in \mathcal{T}_{\ell}$, obtained from $T$ by ``Sliding'' $T_j$ from $u_{p'}$ to $u_p$ (Figure~\ref{fig:exslide}).

\begin{figure}[htbp]
\centering
    \begin{tikzpicture}[scale=.8]
        \node[fill=black,circle,inner sep=1pt] (t1) at (-1,0) {};
        \node[fill=black,circle,inner sep=1pt] (t2) at (0,0) {};
        \node[fill=black,circle,inner sep=1pt] (t3) at (1,0) {};
         \node[fill=black,circle,inner sep=1pt] (t4) at (2,0) {};

        \draw [dashed] (t1)--(t4);
        \draw  (t1)--(-2.4,-1)--(-2.4,1);
        \draw  (t3)--(1.5,-1)--(0.5,-1);
        \draw  (t4)--(3.4,-1)--(3.4,1);
         \draw  (t1)--(-2.4,1);
        \draw  (t3)--(0.5,-1);
        \draw  (t4)--(3.4,1);

        \node at (-0.8,.2) {$u_0$};
        \node at (0,.2) {$u_p$};
         \node at (0.8,.2) {$u_{p'}$};
          \node at (1.7,.3) {$u_{p+p'}$};

        \node at (1,-0.7) {$T_{j}$};

        \node at (-1.7,0.02) {$T_{\leq j-1}$};
        \node at (2.8,-0.03) {$T_{\geq j+1}$};

      \draw[->,very thick] (3.8,0) -- (4.5,0);

       \node[fill=black,circle,inner sep=1pt] (t5) at (6,0) {};
        \node[fill=black,circle,inner sep=1pt] (t6) at (7,0) {};
        \node[fill=black,circle,inner sep=1pt] (t7) at (8,0) {};
         \node[fill=black,circle,inner sep=1pt] (t8) at (9,0) {};

        \draw [dashed] (t5)--(t8);
        \draw  (t5)--(4.6,-1)--(4.6,1);
        \draw  (t8)--(10.6,-1)--(10.6,1);
        \draw  (t6)--(6.5,-1)--(7.5,-1);
        \draw  (t5)--(4.6,1);
        \draw  (t8)--(10.6,1);
        \draw  (t6)--(7.5,-1);

        \node at (6.2,.2) {$u_0$};
        \node at (7,.2) {$u_p$};
         \node at (8,.3) {$u_{p'}$};
          \node at (8.9,.35) {$u_{p+p'}$};

        \node at (7,-0.7) {$T_{j}$};

        \node at (5.2,0) {$T_{\leq j-1}$};
        \node at (10,-0.1) {$T_{\geq j+1}$};

        \end{tikzpicture}
\caption{``Sliding'' $T_j$ from $u_{p'}$ to $u_p$.}\label{fig:exslide}
\end{figure}
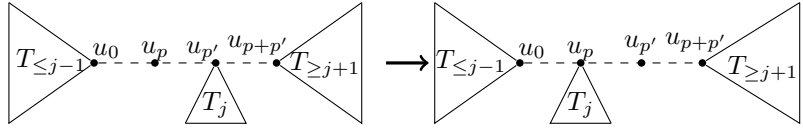

Here we use $V'(T_j)$ to denote $V(T_j)$ without the vertex on the path from $u_0$ to $u_{p+p'}$.
From $T$ to $T'$, $d(S)$ only changes if $S$ contains vertices from one and only one of $T_{\leq j-1}$ and $T_{\geq j+1}$, and if $S \cap V'(T_j) \neq \emptyset$:
\begin{itemize}

\item If $|S\cap V'(T_j)|=a\geq 1$ and $S$ does not contain any other vertices on $P(u_0, u_{p+p'})$:

\begin{itemize}
\item If $|S\cap V(T_{\leq j-1})|=k-a$, then $d_{T'}(S)=d_T(S)-(p'-p).$

\item If $|S\cap V(T_{\geq j+1})|=k-a$, then $d_{T'}(S)=d_T(S)+(p'-p).$
\end{itemize}

The change of sum of values in $d(S)$ is
$$ \sum_{a=1}^{k-1} \left(p'-p\right)\binom{|T_j|-1}{a}\left(\binom{|T_{\geq j+1}|}{k-a}-\binom{|T_{\leq j-1}|}{k-a}\right) > 0 . $$

\item If $|S\cap V'(T_j)|=a\geq 1$, and $S$ contains some other vertices from one and only one of $P(u_0,u_p)$, $P(u_p,u_{p'})$ and $P(u_{p'},u_{p+p'})$,
for convenience, denote $V'(P(u_0, u_p))=V(P(u_0, u_p)-\{u_0\}-\{u_p\}), V'(P(u_{p'}, u_{p+p'}))=V(P(u_{p'}, u_{p+p'})-\{u_{p'}\}-\{u_{p+p'}\})$:

\begin{itemize}
\item If $|S\cap V(T_{\leq j-1})|=b\geq 1$ and $|S \cap V'(P(u_0,u_p))|=k-a-b\geq 1$, then
$d_{T'}(S)=d_T(S)-(p'-p).$

\item If $|S\cap V(T_{\geq j+1})|=b\geq 1$ and $|S \cap V'(P(u_0,u_p))|=k-a-b\geq 1$, then
$d_{T'}(S)=d_T(S).$

\item If $|S\cap V(T_{\leq j-1})|=b\geq 1$ and $|S \cap V(P(u_p,u_{p'}))|=k-a-b\geq 1$,
let  $u_{p+x_1}$ $(u_{p+x_2})$ be the vertices with the smallest (largest) index on $P(u_p,u_{p'})$ that is in $S$,
for some $0\leq x_1\leq x_2\leq p'-p$. Then
$d_{T'}(S)=d_T(S)+x_2-(p'-p).$

\item If $|S\cap V(T_{\geq j+1})|=b\geq 1$ and $|S\cap V(P(u_p,u_{p'}))|=k-a-b\geq 1$, then for similarly defined $x_1$ and $x_2$ we have
$d_{T'}(S)=d_T(S)+x_1.$

\item If $|S\cap V(T_{\leq j-1})|=b\geq 1$ and $|S \cap V'(P(u_{p'},u_{p+p'}))|=k-a-b\geq 1$, then
$d_{T'}(S)=d_T(S).$

\item If $|S\cap V(T_{\geq j+1})|=b\geq 1$  and $|S \cap V'(P(u_{p'},u_{p+p'}))|=k-a-b\geq 1$, then
$d_{T'}(S)=d_T(S)+(p'-p).$
\end{itemize}

By summing the above six subcases (but with fixed values of $x_1$ and $x_2$), we have the total change in $d(S)$ from $T$ to $T'$ as
\begin{align*}
   &\sum_{a=1}^{k-2}\sum_{b=1}^{k-1-a}\binom{|T_j|-1}{a}\binom{p'-p+1}{k-a-b}\left(x_1\binom{|T_{\geq j+1}|}{b}-\left(p'-p-x_2\right)\binom{|T_{\leq j-1}|}{b}\right)\\
   &\quad\quad\quad +\sum_{a=1}^{k-2}\sum_{b=1}^{k-1-a}(p'-p)\binom{|T_j|-1}{a}\binom{p-1}{k-a-b}\left(\binom{|T_{\geq j+1}|}{b}-\binom{|T_{\leq j-1}|}{b}\right)\\
 =&\sum_{a=1}^{k-2}\sum_{b=1}^{k-1-a}\binom{|T_j|-1}{a}\binom{p'-p+1}{k-a-b}\left(\left(x_1+x_2-p'+p\right)\binom{|T_{\leq j-1}|}{b}\right)\\
 &\quad\quad\quad +\sum_{a=1}^{k-2}\sum_{b=1}^{k-1-a}x_1\binom{|T_j|-1}{a}\binom{p'-p+1}{k-a-b}\left(\binom{|T_{\geq j+1}|}{b}-\binom{|T_{\leq j-1}|}{b}\right) \\
 &\quad\quad\quad +\sum_{a=1}^{k-2}\sum_{b=1}^{k-1-a}(p'-p)\binom{|T_j|-1}{a}\binom{p-1}{k-a-b}\left(\binom{|T_{\geq j+1}|}{b}-\binom{|T_{\leq j-1}|}{b}\right)\\
 =: & \Delta_{x_1, x_2} + \Delta_{x_1} +\Delta.
\end{align*}

Through similar arguments as before, it is easy to see that $\Delta_{x_1} \geq 0, \Delta> 0$ and
$$ \sum_{x_1, x_2} \Delta_{x_1, x_2} = 0 $$
since $\Delta_{x_1, x_2} + \Delta_{p'-p-x_2, p'-p-x_1} = 0$.

\item If $|S\cap V'(T_j)|=a\geq 1$, and $S$ contains some other vertices from at least two of
the paths $P(u_0,u_p)$, $P(u_p,u_{p'})$ and $P(u_{p'},u_{p+p'})$:

\begin{itemize}
\item If $|S\cap V(T_{\leq j-1})|=b\geq 1$ and the remaining vertices of $S$ are all in $P(u_0,u_p)$ and $P(u_p,u_{p'})$, let $u_{p+x_2}$ be such a vertex with the largest index
for some $0\leq x_2\leq p'-p$, then
$d_{T'}(S)=d_T(S)+x_2-(p'-p).$

\item If $|S\cap V(T_{\geq j+1})|=b\geq 1$ and the remaining vertices of $S$ are all in $P(u_p,u_{p'})$ and $P(u_{p'},u_{p+p'})$,
let $u_{p+x_1}$ be such a vertex with the smallest index for $0\leq x_1\leq p'-p$, then
$d_{T'}(S)=d_T(S)+x_1.$

\item In any other cases we have $d_{T'}(S)=d_T(S).$

\end{itemize}

Similar arguments as the previous cases show that the total change of value in $d(S)$ (in the above three subcases), from $T$ to $T'$, is non-negative.

\end{itemize}

Thus
$$  SW_k(T') - SW_k(T) > 0, $$
a contradiction. Hence we  must have $|T_{\leq j-1}| \geq |T_{\geq j+1}|$, and consequently
$$ |T_{\leq i-1}| > |T_{\leq j-1}| \geq |T_{\geq j+1}| > |T_{\geq i+1}| $$
for any $i> j$, implying that $r_{i} \geq r_{i+1}$ by the same argument. It follows that $r_j \geq r_{j+1} \geq \cdots$. Similarly, one can show that $r_1 \leq \cdots \leq r_j$.

\item For simplicity we only consider the case when all branch vertices have degree 3. All other cases (with one or two vertices having degree 4) can be argued in exactly the same way.

 Let $S_i$ denote the pendant segment at $v_i$ ($1\leq i \leq k' = k-1$), let $s_i$ denote the length of $S_i$, and let $\mu$ be the minimum length of all pendant segments.

Similar to before we may assume that there exists a smallest index $j'$ such that $s_{j'} = \mu < s_{j'+1}$,
then we have $|T_{\leq j'} - S_{j'}| \geq |T_{\geq j'+1} - S_{j'+1}|$, or interchanging $S_{j'}$ and $S_{j'+1}$ will increase the Steiner Wiener index by
Lemma~\ref{lemma3.1}.
Thus
$$|T_{\leq i} - S_i| \geq |T_{\leq j'}| > |T_{\leq j'} - S_{j'}| \geq |T_{\geq j'+1} - S_{j'+1}| \geq |T_{\geq i+1}| > |T_{\geq i+1} - S_{i+1}|$$
for any $i > j'$, which implies that $s_{i+1} \geq s_i$ by the same argument.  It follows that $s_{j'} \leq s_{j'+1} \leq \cdots$. One can show that $s_1 \geq \cdots \geq s_{j'}$ in exactly the same way.
\end{enumerate}

\end{proof}

\section{Extremal trees with a given number of segments}
\label{sec:num}

It is also interesting to examine the extremal problems among trees with given order and number of segments. For those that minimizes the original Wiener index it was shown in \cite{lin2015} to be the so-called {\it balanced starlike trees} (starlike trees whose segment lengths differ by no more than 1).

\begin{theo}\label{theo:theo1}
Given the number of segments and the number of vertices, the balanced starlike tree minimizes the Steiner $k$-Wiener index for any $k$.
\end{theo}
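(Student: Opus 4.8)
The plan is to combine Theorem~\ref{theo:theo} with a second reduction that makes the segment lengths as equal as possible. By Theorem~\ref{theo:theo}, among all trees with a given segment sequence the starlike tree minimizes $SW_k$; so it suffices to restrict attention to starlike trees with a fixed number $m$ of segments and fixed order $n$, and to show that among these the balanced one (all segment lengths equal to $\lfloor (n-1)/m \rfloor$ or $\lceil (n-1)/m \rceil$) is optimal. Equivalently, it suffices to prove that if a starlike tree has two segments of lengths $p$ and $q$ with $p \geq q+2$, then shortening the longer segment by one and lengthening the shorter by one strictly decreases $SW_k$; iterating this ``balancing'' move terminates at the balanced starlike tree, which is unique up to isomorphism, and gives the claim.

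First I would set up the balancing move precisely. Let $R$ be a starlike tree with center $c$, let the two segments in question be $c x_1 x_2 \cdots x_p$ (a leg of length $p$) and $c y_1 y_2 \cdots y_q$ (a leg of length $q$), with $p \geq q+2$, and let $R'$ be obtained by deleting the leaf $x_p$ and attaching a new leaf $y_{q+1}$ to $y_q$. Then I would compute $d_R(S) - d_{R'}(S)$ for each $k$-subset $S$. The Steiner tree of $S$ in a starlike tree is just the union of the paths from $c$ to the ``farthest'' chosen vertex on each leg (plus $c$ itself if at least two legs are hit, or a single path if only one leg is hit), so $d_R(S)$ depends on $S$ only through, for each leg, the largest index of a chosen vertex on that leg. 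Write $P$ for the first leg and $Q$ for the second. The value $d(S)$ changes between $R$ and $R'$ only when the largest chosen index on leg $P$ equals $p$ (so that removing $x_p$ matters) or when leg $Q$ contributes the new vertex $y_{q+1}$; a careful enumeration shows the net change, summed over all $S$, has the same sign as a difference of binomial-coefficient expressions in which the ``long leg'' contributes terms weighted by the smaller binomials and the ``short leg'' by the larger ones — analogous to the $\Delta_1>0$ computation in the proof of Lemma~\ref{lemma3.1}. The key point is a convexity/monotonicity inequality of the shape $\binom{p-1}{j} - \binom{q}{j} \geq 0$ together with the fact that the ``rest of the tree'' (the other $m-2$ legs and $c$) contributes an identical factor to both sides, so the inequality is strict as soon as $p \geq q+2$ and $k$ is such that some relevant $S$ exists.

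The main obstacle I anticipate is the bookkeeping in the case analysis: one must handle separately the subcases according to whether $S$ meets only leg $P$, only leg $Q$, both, or neither, and whether $S$ meets any of the other legs of $R$ — and then show that the surviving terms assemble into a single manifestly positive sum. This is the same kind of argument carried out in Sections~\ref{sec:min} and \ref{sec:max}, so I would model the write-up on the $\Delta_1$ computation in Lemma~\ref{lemma3.1}, pairing up terms and invoking the elementary inequality $\binom{a}{j}\ge\binom{b}{j}$ for $a\ge b$. A minor additional point is to confirm that the balancing move keeps the tree starlike and the order and segment number fixed (it obviously does), and that repeated application reaches the balanced starlike tree in finitely many steps (each move strictly decreases $\sum_i \binom{l_i}{2}$, say, which is a nonnegative integer), so the strictly decreasing sequence of $SW_k$ values forces the balanced starlike tree to be the unique minimizer.
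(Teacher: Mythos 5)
Your proposal is correct and follows essentially the same route as the paper: reduce to starlike trees via Theorem~\ref{theo:theo}, then show that transferring one unit of length from a longer leg to a shorter one strictly decreases $SW_k$, via a case analysis whose net effect is a positive sum of products of binomial coefficients as in Lemma~\ref{lemma3.1}, and iterate. The only cosmetic difference is that the paper realizes the balancing move by ``sliding'' the central subtree $T_0$ one step along the longer leg, which keeps the vertex set fixed and thereby avoids the small bookkeeping step your version needs, namely identifying the deleted leaf $x_p$ with the new leaf $y_{q+1}$ before comparing $d(S)$ across the two trees.
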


\begin{proof}
Let such an optimal tree be $T$ with $n$ vertices and $m\geq 3$ segments, with segment sequence $(l_1, l_2, \cdots, l_m)$. From Theorem~\ref{theo:theo} we know $T$ is a starlike tree. It remains to show that $|l_i - l_j| \leq 1$ for any $1 \leq i \leq j \leq m$.

Otherwise, suppose,
without loss of generality, that $l_2< l_1-1$. Let $v_0$ be the unique branch vertex in $T$ with $ v_0u_1\cdots u_{l_1}$ and $v_0v_1 \cdots v_{l_2}$ being two pendent segments of length $l_1$ and $l_2$, respectively. We denote by
$T_0$ the component containing $v_0$ in $T- v_0u_1 - v_0v_1$. Consider, now, the tree $T'$ obtained from $T$
by ``sliding'' $T_0$ from $v_0$ to $u_1$ (Figure~\ref{fig:e1}).

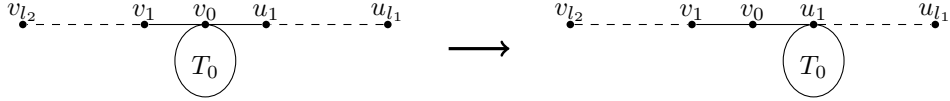
\begin{figure}[htbp]
\centering
    \begin{tikzpicture}[scale=.8]
        \node[fill=black,circle,inner sep=1pt] (t1) at (0,0) {};
        \node[fill=black,circle,inner sep=1pt] (t2) at (1,0) {};
		\node[fill=black,circle,inner sep=1pt] (t3) at (2,0) {};
        \node[fill=black,circle,inner sep=1pt] (t4) at (4,0) {};
        \node[fill=black,circle,inner sep=1pt] (t5) at (-2,0) {};

        \draw (t1)--(t3);
        \draw [dashed] (t1)--(t5) (t4)--(t3);
       \draw (1,-.6)  ellipse [x radius=.5cm, y radius=0.6cm];
        \node at (4,.2) {$u_{l_1}$};
        \node at (2,.2) {$u_{1}$};
        \node at (1,.2) {$v_{0}$};
        \node at (0,.2) {$v_{1}$};
        \node at (-2,.2) {$v_{l_2}$};
                \node at (1,-0.7) {$T_{0}$};

         \draw [->,very thick] (5,-0.4)--(6,-.4);

     \node[fill=black,circle,inner sep=1pt] (t1) at (7,0) {};
        \node[fill=black,circle,inner sep=1pt] (t2) at (9,0) {};
		\node[fill=black,circle,inner sep=1pt] (t3) at (10,0) {};
        \node[fill=black,circle,inner sep=1pt] (t4) at (11,0) {};
        \node[fill=black,circle,inner sep=1pt] (t5) at (13,0) {};

        \draw (t2)--(t4);
        \draw [dashed] (t1)--(t2) (t4)--(t5);

 \draw (11,-.6)  ellipse [x radius=.5cm, y radius=0.6cm];
        \node at (11,.2) {$u_1$};
        \node at (10,.2) {$v_{0}$};
        \node at (9,.2) {$v_{1}$};
        \node at (7,.2) {$v_{l_2}$};
        \node at (13,.2) {$u_{l_1}$};
                \node at (11,-0.7) {$T_0$};

        \end{tikzpicture}
\caption{The tree $T$ (on the left) and the tree $T'$ (on the right).}\label{fig:e1}
\end{figure}

Similar to before we may discuss the change in $d(S)$ from $T$ to $T'$ depending on the choices of $S$. Here we use $V'(T_0)$ to denote $V(T_0)$ without the vertex on the path from $v_{l_2}$ to $u_{l_1}$.

\begin{itemize}
\item If $|S\cap V'(T_0)|=a\geq 1$, $|S\cap \{u_1, u_2, \cdots, u_{l_1}\}|=0$, and $|S\cap \{v_0, v_1, \cdots, v_{l_2}\}|=k-a\geq 1$, then
$d_{T'}(S)=d_{T}(S)+1.$
\item If $|S\cap V'(T_0)|=a\geq 1$, $|S\cap \{u_1, u_2, \cdots, u_{l_1}\}|=k-a\geq 1$, $|S\cap \{v_0, v_1, \cdots, v_{l_2}\}|=0$, then
$d_{T'}(S)=d_{T}(S)-1.$
\item In other cases, $d_{T'}(S)=d_{T}(S).$
\end{itemize}

Summing over all cases, we have
\begin{align*}
 SW_k(T) - SW_k(T') & = \sum_{S\subseteq V(T),|S|=k}d_{T}(S)-d_{T'}(S)\\
 & = \sum_{a=1}^{k-1}\binom{|T_0|-1}{a}\cdot \left(\binom{l_1}{k-a}-\binom{l_2+1}{k-a}\right)\\
 & > 0,
\end{align*}
a contradiction.
\end{proof}

Next we consider the trees, with given order and number of segments, that maximize the Steiner $k$-Wiener index. With Theorems~\ref{theo:theo'} and \ref{thm:further} we only need to consider quasi-caterpillars with vertex degrees $\leq 4$. First we describe four special trees, each of which is a caterpillar of order $n$ with $m$ segments, with internal vertices $v_1, \ldots , v_{t-1}$ on a path $P(v_0, v_t)= v_0v_1 \ldots v_{t-1} v_t$:

\begin{itemize}
\item The tree $T_i$ ($t=\frac{2n-m-1}{2}$): $d(v_1)=d(v_{t-1})=4$,  $d(v_2)=d(v_3)=\cdots=d(\lfloor\frac{m-7}{4}\rfloor+1)=3$, $d(v_{t-2})=d(v_{t-3})=\cdots=d(v_{t-\lceil\frac{m-7}{4}\rceil-1})=3$, and all other internal vertices have degree $2$.

\item The tree $T_{ii}$ ($t=\frac{2n-m+1}{2}$):  $d(v_1)=d(v_2)=\cdots=d(v_{\lfloor\frac{m-1}{4}\rfloor})=3$, $d(v_{t-1})=d(v_{t-2})=\cdots=d(v_{t-\lceil\frac{m-1}{4}\rceil})=3,$  and all other internal vertices have degree $2$.

\item The tree $T_{iii}$ ($t=\frac{2n-m}{2}, m\equiv 0  \mod 4$):
$d(v_1)=4$, $d(v_2)=d(v_3)=\cdots=d(v_{\frac{m}{4}-1})=3$,
$d(v_{t-1})=d(v_{t-2})=\cdots=d(v_{t-\frac{m}{4}})=3,$ and all other internal vertices have degree $2$.

\item The tree $T_{iv}$ ($t=\frac{2n-m}{2}, m\equiv 2 \mod 4$):
$d(v_1)=4$, $d(v_2)=d(v_3)=\cdots=d(v_{\lfloor\frac{m-4}{4}\rfloor+1})=3$,
$d(v_{t-1})=d(v_{t-2})=\cdots=d(v_{t-\lceil\frac{m-4}{4}\rceil})=3,$ and all other internal vertices have degree $2$.

\end{itemize}

\begin{theo}\label{theo:theo2}
For any $k$, among trees of order $n$ with $m$ segments, the Steiner $k$-Wiener index is maximized by one of the caterpillars $T_i$, $T_{ii}$, $T_{iii}$, $T_{iv}$.
\end{theo}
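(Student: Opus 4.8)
The plan is to reduce the problem, via Theorems~\ref{theo:theo'} and \ref{thm:further}, to a finite optimization over caterpillars with all branch vertices of degree $3$ or $4$ (degree $4$ allowed only at $v_1$ and $v_{k-1}$), whose backbone segment lengths are unimodal and whose pendant segment lengths are ``anti-unimodal''. First I would show that in such an optimal caterpillar every segment on the backbone has length exactly $1$, i.e. the branch vertices are consecutive on the backbone except possibly at the two ends where there is a single extra vertex; this should follow from a sliding argument: if two consecutive branch vertices are at distance $\geq 2$, redistribute the path length to the outside of the caterpillar (extending the backbone beyond $v_1$ or $v_{k-1}$), and check via the now-standard case analysis on $S$ that $SW_k$ does not decrease. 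Symmetrically I would argue that every pendant segment has length $1$ (a pendant segment of length $\geq 2$ can have its excess vertices moved onto the backbone ends), so that the caterpillar is determined by: the number $t$ of internal vertices on the backbone, the positions of the branch vertices among $v_1,\dots,v_{t-1}$, and whether $v_1$, $v_{t-1}$ have degree $3$ or $4$.

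Next I would encode the two defining constraints of $T_{\ell}$: the number of vertices $n$ and the number of segments $m$. With all backbone and pendant segments of length $1$, if there are $c_3$ branch vertices of degree $3$ and $c_4$ of degree $4$ ($c_4 \in \{0,1,2\}$), then $n = (t+1) + c_3 + 2c_4$ (backbone vertices plus pendant leaves) and $m = (t) + c_3 + 2c_4$ or a similar linear relation; the parity of $m$ then forces $c_4$, which is exactly why the statement splits into the four cases $T_i$ (two degree-$4$ ends), $T_{ii}$ (none), $T_{iii}$, $T_{iv}$ (one, depending on $m \bmod 4$). So the remaining freedom is only \emph{where} to place the $c_3$ degree-$3$ branch vertices along the backbone. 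Here the unimodality/anti-unimodality conclusions of Theorem~\ref{thm:further} (applied with all lengths equal to $1$) together with Lemma~\ref{lemma3.1} should force the branch vertices to cluster toward the two ends of the backbone in a balanced fashion: whenever there is a degree-$2$ vertex strictly between two degree-$3$ vertices on the same side of the ``center'', one can switch a pendant segment outward and strictly increase $SW_k$, unless the degree-$3$ vertices already occupy $v_2,\dots,v_{\lfloor (m-1)/4\rfloor}$ (resp.\ the mirror block at the other end). This pins down exactly the configurations described as $T_i,\dots,T_{iv}$, up to the possible tie between the two ends, which accounts for the ``one of'' in the statement.

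The main obstacle I anticipate is the balancing step: showing that the $c_3$ degree-$3$ vertices split as evenly as possible between the two ends (hence the $\lfloor\cdot\rfloor$ at one end and $\lceil\cdot\rceil$ at the other). The switching Lemma~\ref{lemma3.1} only gives a strict inequality when \emph{both} $|X|>|Y|$ and $|A|>|B|$, so near the point of balance the lemma is inconclusive and one must either prove a direct inequality comparing the finitely many near-balanced candidates, or set up a separate exchange argument (moving a single pendant segment between the two end-blocks and tracking $\Delta$ as a difference of sums of binomial coefficients, then showing the sign is controlled by which block is larger). I would expect this comparison to reduce, after cancellation, to a positivity statement about an alternating sum of products $\binom{|X|}{\cdot}\binom{|A|}{\cdot}$ of the same flavor as $\Delta_1$ and $\Delta_{3_b}^i$ in the proof of Lemma~\ref{lemma3.1}, so the techniques are already in place; the work is bookkeeping rather than a new idea. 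Finally I would note that the four trees $T_i,\dots,T_{iv}$ are mutually exclusive (distinguished by $m \bmod 4$ and by $t$), so exactly one of them is the relevant candidate for a given $(n,m)$, and the theorem follows.
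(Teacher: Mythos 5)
Your first reduction step contains a structural error that derails the rest of the argument. You claim that in the optimal caterpillar every backbone segment has length $1$, with the excess length pushed ``to the outside'' (into the end segments $P(v_0,v_1)$, $P(v_{k-1},v_k)$), so that the branch vertices form a consecutive block and the bare degree-$2$ path sits at the ends of the spine. This is the opposite of the truth: in the target trees $T_i,\dots,T_{iv}$ the branch vertices cluster at the \emph{two ends} of the backbone (positions $v_1,\dots,v_{\lfloor\cdot\rfloor}$ and $v_{t-\lceil\cdot\rceil},\dots,v_{t-1}$) and all the degree-$2$ vertices form one long segment in the \emph{middle}. Your proposed configuration also violates part (2) of Theorem~\ref{thm:further}, which forces the backbone lengths to be unimodal with the peak in the interior (so $r_1$ and $r_k$ cannot both be the long ones), and it is inconsistent with your own later sentence that the branch vertices ``cluster toward the two ends.'' Concretely, the sliding operation you describe moves weight in the wrong direction: pushing a degree-$2$ spine vertex outward brings the pendant leaves closer together and decreases the relevant Steiner distances, so the ``standard case analysis on $S$'' would not yield $SW_k(T')\geq SW_k(T)$. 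What the paper actually does at this stage is (a) show by a local exchange (as in Theorem~\ref{theo:theo1}) that the pendant segments all have length $1$, so $T$ is a genuine caterpillar, and then (b) invoke the degree-sequence result of \cite{jie2018+} to get that the internal degrees along the spine read $3,\dots,3,2,\dots,2,3,\dots,3$ --- i.e.\ degree-$2$ in the middle --- before balancing the two degree-$3$ blocks with Lemma~\ref{lemma3.1}.

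Two smaller points. First, your parity bookkeeping is right in spirit, but your conclusion that ``exactly one of $T_i,\dots,T_{iv}$ is the relevant candidate for a given $(n,m)$'' is false: for odd $m$ both $T_i$ (two degree-$4$ ends, $t=\frac{2n-m-1}{2}$) and $T_{ii}$ (no degree-$4$ vertex, $t=\frac{2n-m+1}{2}$) are realizable with the same $n$ and $m$, and the theorem's ``one of'' exists precisely because the comparison between them is left open (the paper, too, only works out the all-degree-$3$ case in detail). Second, your worry about the balancing step near equality is legitimate but is handled in the paper more simply than you anticipate: one moves a single pendant edge from $w_x$ to $w_{x+c}$ when the left block is strictly larger ($x>a-x+1$), which is exactly a ``switch'' of a pendant edge with a single vertex and falls directly under Lemma~\ref{lemma3.1}; no new alternating-sum estimate is needed.
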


\begin{proof}
Let $T$ be such an optimal tree of order $n$ with $m$ segments. As already mentioned, from Theorems~\ref{theo:theo'} and \ref{thm:further} we may assume $T$ to be a caterpillar with vertex degree no more than 4 and at most two vertices of degree 4 (see Theorem~\ref{thm:further}). We only consider the case with all internal vertex of degree 3 here. All other cases are similar.

Let the backbone be the longest path $P(u_0, u_{a+1})$, with leaves $u_0$ and $u_{a+1}$, and branch vertices $u_1, \cdots, u_{a}$. Note that $m=2a+1$ from our assumption on the number of segments and vertex degrees.

Let $l_1$ and $l_2$ be the lengths of $P(u_0, u_1)$ and the other pendent segment ending at $u_1$. First we show that $l_2=1$.

Otherwise, we have $l_1 \geq l_2 >1$. If we replace the two segments by segments of length $1$ and $l_1+l_2-1$, the Steiner Wiener index
will increase (by arguments similar to that of Theorem~\ref{theo:theo1}), a contradiction.

Thus the pendent segments at $v_1$, and for the same reasoning, $v_{k-1}$, have to of length $1$. Then, by statement (3) of Theorem~\ref{thm:further},
all pendent segments must have length $1$. In other words, $T$ is a caterpillar.

From the study of trees with given degree sequence that maximize the Steiner Wiener index \cite{jie2018+}, we know
that the degrees of the internal vertices along the backbone have to be decreasing first, then increasing, i.e.,
the sequence of degrees has to be the form $3, 3,\cdots, 3, 2, 2,\cdots, 2,3, \cdots, 3$.

Lastly we show that the vertices of degree $3$ are ``evenly distributed'' on the two sides of the backbone. To show this we relabel the vertices (including both branch vertices and other vertices) on the backbone first: $w_0=u_0, w_1, \cdots, w_{a+c+1}$. Suppose there is a pendant edge at each of $w_1, \cdots, w_x$ and $w_{x+c+1}, \cdots, w_{a+c}$ with $c=n-m-1$
being the number of vertices of degree $2$ in $T$.

Suppose, for contradiction, that $c>0$, and $x> a-x+1$.
Let $T'$ be the tree obtained from $T$ by moving one pedant edge from $w_x$ to $w_{x+c}$ (Figure~\ref{fig:ex5}).

\begin{figure}[htbp]
\centering
    \begin{tikzpicture}[scale=.8]
        \node[fill=black,circle,inner sep=1pt] (t1) at (-4,0) {};
        \node[fill=black,circle,inner sep=1pt] (t2) at (-3,0) {};
		\node[fill=black,circle,inner sep=1pt] (t3) at (-1,0) {};
        \node[fill=black,circle,inner sep=1pt] (t4) at (0,0) {};
        \node[fill=black,circle,inner sep=1pt] (t5) at (1,0) {};
        \node[fill=black,circle,inner sep=1pt] (t6) at (3,0) {};
        \node[fill=black,circle,inner sep=1pt] (t7) at (4,0) {};
         \node[fill=black,circle,inner sep=1pt] (t8) at (6,0) {};
         \node[fill=black,circle,inner sep=1pt] (t9) at (7,0) {};
        \node[fill=black,circle,inner sep=1pt] (a1) at (-3,-1) {};
        \node[fill=black,circle,inner sep=1pt] (a2) at (-1,-1) {};
         \node[fill=black,circle,inner sep=1pt] (a3) at (0,-1) {};
         \node[fill=black,circle,inner sep=1pt] (a4) at (4,-1) {};
          \node[fill=black,circle,inner sep=1pt] (a5) at (6,-1) {};
       \draw (t1)--(t2) (t3)--(t5) (t7)--(t6) (t9)--(t8);
       \draw [dashed] (t2)--(t3) (t5)--(t6) (t7)--(t8);
        \draw (t2)--(a1) (t3)--(a2) (t4)--(a3) ;
        \draw  (t7)--(a4);
        \draw (t8)--(a5);

        \node at (-4,0.2) {$w_{0}$};
        \node at (-3,0.2) {$w_{1}$};
        \node at (-1,.2) {$w_{x-1}$};
        \node at (0,0.2) {$w_{x}$};
        \node at (1,0.2) {$w_{x+1}$};
        \node at (3,0.2) {$w_{x+c}$};
        \node at (4.4,0.2) {$w_{x+c+1}$};
        \node at (6,0.2) {$w_{a+c}$};
         \node at (7.4,0.2) {$w_{a+c+1}$};

        \end{tikzpicture}
\caption{An extremal caterpillar.}\label{fig:ex5}
\end{figure}
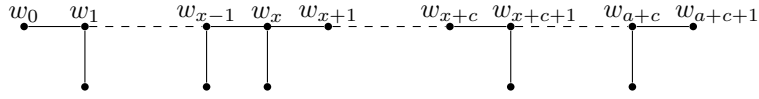

Note that the transformation from $T$ to $T'$ can be simply considered as ``switching'' the pendant edge at $w_x$ with the single vertex $w_{x+c}$, direct application of Lemma~\ref{lemma3.1} shows
$SW_k(T')> SW_k(T)$, a contradiction.

Summarizing the above, the optimal tree here is exactly $T_{ii}$ as described above.
\end{proof}

\section{Concluding remarks}
\label{sec:con}
We considered the extremal problems with respect to the Steiner Wiener index among trees with a given segment sequence. For the minimizing case, the extremal tree is shown to be a starlike tree and coincides with that for the original Wiener index. For the maximizing case, the extremal tree is shown to be a quasi-caterpillar with some additional properties. This is also similar to what is known about the original Wiener index. It would be interesting, however, to see if the extremal quasi-caterpillar differs for Steiner $k$-Wiener index for different values of $k$; and if so, how big is the difference.

Extremal problems among trees of given order with given number of segments are also considered and the extremal trees are characterized. In the maximizing case, it is shown that the extremal tree has to be one of several caterpillars. Further investigation of more exact characterization would be interesting.

\end {document}